\newtheorem{theorem}{Theorem}[section]
\newtheorem{proposition}[theorem]{Proposition}
\newtheorem{lemma}[theorem]{Lemma}
\theoremstyle{definition}
\newtheorem{example}[theorem]{Example}
\theoremstyle{remark}
\numberwithin{equation}{section}
\date{\today}
\begin{document}

\title[One copy of a monotone pattern]{Permutations with exactly one copy of a monotone pattern of length $k$, and a generalization}

\author{Mikl\'os B\'ona}
\address{Department of Mathematics, University of Florida, 358 Little Hall, PO Box 118105,
Gainesville, FL 32611-8105}
\email{bona@ufl.edu}

\author{Alexander Burstein}
\address{Department of Mathematics, Howard University, Washington, DC 20059}
\email{aburstein@howard.edu}

\begin{abstract} We construct an injection from the set of permutations of length $n$ that contain exactly one copy of the decreasing pattern of length $k$ to the set of permutations of length $n+2$ that avoid that pattern. We then prove that the generating function counting the former is not rational, and in the case when $k$ is even and $k\geq 4$, it is not even algebraic. We extend our injection and our  nonrationality result to a larger class of patterns. 
\end{abstract}

\maketitle

\section{Introduction} \label{sec:intro}

We say that a permutation $p$ \emph{contains} the pattern $q=q_1q_2\cdots q_k$ 
if there is a $k$-element set of indices $i_1<i_2< \cdots <i_k$ so that $p_{i_r} < p_{i_s} $ if and only if $q_r<q_s$; in other words, if there is a $k$-element subsequence of $p$ that is order-isomorphic to $q$. If $p$ does not contain $q$, then we say that $p$ \emph{avoids} $q$. For example, $p=3752416$ contains $q=2413$, as the first, second, fourth, and seventh entries of $p$ form the subsequence 3726, which is order-isomorphic to $q=2413$.  For an introduction to permutation patterns, we refer the reader to a recent survey \cite{vatter} and a book \cite{combperm} on the subject.  

In this paper, we will use $\mathfrak{S}_n(q)$ (resp. $\mathfrak{S}_n(q,1)$) to denote the set of all permutations of length $n$ that avoid $q$ (resp. contain \emph{exactly} one copy of $q$). Likewise, we will use $S_n(q)=|\mathfrak{S}_n(q)|$ and $S_n(q,1)=|\mathfrak{S}_n(q,1)|$ to denote the number of the corresponding permutations.

The numbers $S_n(q)$ have been the subject of vigorous research during the last thirty years, and while plenty of open questions remain,  some strong results have been found.  There are much fewer results concerning the numbers $S_n(q,1)$. 
For the pattern $q=321$,
John Noonan \cite{noonan} proved the exact formula 
\begin{equation} \label{fnoonan} S_n(321,1)=\frac{3}{n} {2n\choose n+3}.\end{equation} 
 More recently, the second author \cite{burstein} and Doron Zeilberger \cite{zeilberger} also
contributed to a deeper understanding of these permutations. 

It is well-known that $S_n(q)=\binom{2n}{n}/(n+1)$ for all patterns $q$ of length three. So formula \eqref{fnoonan} shows that $\lim_{n\rightarrow \infty} S_n(321,1) / S_n(321) =3$,  and therefore the number of $n$-permutations that contain one copy of $321$ is only a constant times more than the number
of $n$-permutations that contain no copies of 321. The analogous statement is not true for all patterns. For instance, it is known \cite{bona-one} that $S_n(132)={2n-3 \choose n-3}$, and so 
$\lim_{n\rightarrow \infty} S_n(132,1) / S_n(132) = \infty$. 

In this paper, we generalize some of the work on $q=321$ mentioned earlier  by proving that if $k\geq 3$, then  $\lim_{n\rightarrow \infty} S_n(k\cdots 21,1) / S_n(k\cdots 21) \leq C_k$, for a finite constant $C_k$. We then use this fact, and a recent result of the first author to show that the ordinary generating function $s_{k,1}(z)$ of the sequence   $\{S_n(k\cdots 21,1)\}_{n\ge 0}$ is not rational.  Information on the growth rate of the sequence $\{S_n(k\cdots 21,1)\}_{n\ge 0}$ enables us to prove that if $k>2$ is even, then $s_{k,1}(z)$ is not algebraic. 

A crucial tool in our proof is an injection from the set of all  permutations of length $n$ that contain exactly one
copy of the decreasing pattern of length $k$ to the set of permutations of length $n+2$ that avoid that
pattern. In Section \ref{sec:rhoisany}, we generalize this injection for a family of patterns, which enables us to generalize our nonrationality result as well. 

\section{An injection for the case of $k=3$} \label{kisthree}

Let $\mathfrak{S}_n(q)$ denote the set of all permutations of length $n$ that avoid $q$, and let 
$\mathfrak{S}_n(q,1)$ denote the set of all permutations of length $n$ that contain exactly one copy of $q$.
In this section, we use the analysis of $\mathfrak{S}_n(321)$ due to the second author \cite{burstein} and Zeilberger
\cite{zeilberger} to construct an injection $f:\mathfrak{S}_n(321,1) \to \mathfrak{S}_{n+2} (231) $. 
The important point is not simply constructing an injection, but to construct an injection that can be generalized
for longer patterns, as we will do in the next section. 

If $p$  is a permutation, then a \emph{right-to-left minimum} in $p$ is an entry that is smaller than all entries to its right. For instance, if $p=316254$, then the right-to-left minima of $p$ are the entries 1, 2, and 4.  

We will need the following simple fact, which is a direct consequence of the classic Simion-Schmidt bijection \cite{simion} after taking reverses. Let $p=p_1p_2\cdots p_n$ be a $321$-avoiding permutation. Keep the positions and values of the right-to-left minima of $p$ fixed, and fill the remaining slots with the remaining entries from right to left so that each slot is filled with the smallest entry that can be inserted into that slot without creating a new right-to-left minimum. For instance, if $p=3\underline{1}46\underline{2}7\underline{5}$, with the right-to-left minima 1, 2, and 5 at positions 2, 3, and 6, respectively, then $g(p)=7\underline{1}43\underline{2}6\underline{5}$.

\begin{proposition} \label{simion-sch}
The map $g$ defined above is a bijection from the set $\mathfrak{S}_n(321)$ to the set $\mathfrak{S}_n(231)$. 
\end{proposition}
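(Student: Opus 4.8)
The plan is to read $g$ as a conjugate of the classical Simion--Schmidt bijection by the reversal operation, and to fall back on a direct structural argument for the few facts that need checking. Recall that reversal $r\colon p_1p_2\cdots p_n \mapsto p_np_{n-1}\cdots p_1$ is an involution on $\mathfrak{S}_n$ that carries the pattern $321$ to $123$ and the pattern $231$ to $132$; hence $r$ restricts to bijections $\mathfrak{S}_n(321)\to\mathfrak{S}_n(123)$ and $\mathfrak{S}_n(231)\to\mathfrak{S}_n(132)$. Moreover $r$ turns right-to-left minima into left-to-right minima (preserving their values and reflecting their positions), turns the right-to-left filling order into the left-to-right filling order, and turns the rule ``smallest entry not creating a new right-to-left minimum'' into ``smallest entry not creating a new left-to-right minimum.'' Thus $g$ is exactly $r\circ S\circ r$, where $S$ is the Simion--Schmidt map on $\mathfrak{S}_n(123)\to\mathfrak{S}_n(132)$ defined by the left-to-right version of the same recipe. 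Since $S$ is a bijection and $r$ is a bijection on the relevant avoidance classes, this immediately gives that $g$ is a bijection $\mathfrak{S}_n(321)\to\mathfrak{S}_n(231)$.

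To make the argument self-contained I would instead verify the three ingredients directly. First, I would record the standard structure of $321$-avoiders: the right-to-left minima of $p\in\mathfrak{S}_n(321)$ increase in value from left to right, and the remaining entries also form an increasing subsequence; consequently $p$ is \emph{completely determined} by the set of (position, value) pairs of its right-to-left minima. Second, I would show that $g$ is well defined (the greedy step never runs out of legal entries) and that it \emph{preserves} right-to-left minima: the entries that $g$ leaves fixed remain right-to-left minima of $g(p)$, and no filled slot becomes a new one. The key observation here is that, because the right-to-left minima increase from left to right, the minimum of all entries lying to the right of a given slot is exactly the nearest right-to-left minimum to its right; so the greedy constraint at slot $j$ is simply ``the entry placed must exceed the nearest right-to-left minimum to the right of $j$,'' and any entry placed to the right of a fixed minimum of value $v$ is forced to exceed $v$.

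Granting these two facts, injectivity is immediate: $g(p)$ has the same right-to-left minima as $p$, and a $321$-avoider is determined by its right-to-left minima, so $g(p)=g(p')$ forces $p=p'$. It then remains to show that $g(p)$ avoids $231$; combined with injectivity and the equality $|\mathfrak{S}_n(321)|=|\mathfrak{S}_n(231)|=\tfrac{1}{n+1}\binom{2n}{n}$, this upgrades the injection to a bijection. For the avoidance I would argue by contradiction: a $231$ pattern $\sigma_a\sigma_b\sigma_c$ in $\sigma=g(p)$ has $\sigma_c<\sigma_a<\sigma_b$, so $\sigma_a$ cannot be a right-to-left minimum and is therefore a greedily filled entry; tracing through the greedy rule and the location of the nearest right-to-left minimum to the right of $a$ should contradict the minimality of the value chosen at slot $a$.

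I expect the main obstacle to be precisely this last step---showing that the greedy image avoids $231$ (equivalently, constructing the inverse of $g$ explicitly and checking that it lands in $\mathfrak{S}_n(321)$). Establishing that $g$ is well defined and minima-preserving is bookkeeping, and injectivity is then essentially free; but pinning down that ``smallest legal entry from the right'' produces \emph{exactly} the $231$-avoiders is where the real combinatorial content lies. The cleanest route may well be to sidestep it by quoting Simion--Schmidt as above, letting the reversal conjugation transport their bijectivity and image characterization wholesale.
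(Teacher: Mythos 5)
Your recommended route---reading $g$ as $r\circ S\circ r$, where $S$ is the classical Simion--Schmidt map and $r$ is reversal---is correct, and it is precisely the observation the paper itself makes when introducing $g$ (``a direct consequence of the classic Simion-Schmidt bijection after taking reverses''). Reversal does carry $\mathfrak{S}_n(321)$ to $\mathfrak{S}_n(123)$ and $\mathfrak{S}_n(231)$ to $\mathfrak{S}_n(132)$, exchanges right-to-left and left-to-right minima, and transports the greedy rule; so quoting Simion--Schmidt finishes the proof. However, the paper's written proof is not this conjugation argument: it is direct and self-contained, and it closes exactly the two points your fallback leaves open.

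First, the $231$-avoidance step you flag as ``the main obstacle'' is settled by the paper in one line, and your anticipated contradiction does go through: if $g(p)$ contained a $231$, it would contain a copy whose ``1'' is a right-to-left minimum $r$ (replace the ``1'' by a smaller entry to its right if needed). The ``2'' and ``3'' of that copy are then filled entries larger than $r$; since filling proceeds right to left, the slot of the ``3'' is filled while the value of the ``2'' is still available and legal (it exceeds $r$, a fixed minimum to its right), so the greedy rule could not have placed the larger value there. Second, instead of combining injectivity with the Catalan count $S_n(321)=S_n(231)$, the paper exhibits the inverse explicitly: given $p^*\in\mathfrak{S}_n(231)$, keep its right-to-left minima and write the remaining entries into the remaining slots in increasing order. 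Your own structural fact---that a $321$-avoider is determined by its right-to-left minima because its non-minima increase---is exactly what makes this inverse well defined, and it yields surjectivity with no enumeration at all. So your proposal is sound as a citation-based proof, but the paper's finish is both more elementary and more informative, since the explicit description of $g$ and its effect on right-to-left minima is reused later (Proposition \ref{rilmin}).
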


\begin{proof}
Note that $g(p)\in \mathfrak{S}_n(231)$, since if $g(p)$ contained a copy of 231, it would contain one that ends in a right-to-left minimum $r$, but that is impossible because that 
would mean that two entries that are larger than $r$ were not placed in the order specified by the algorithm defining $g$. Furthermore, $g$ is a bijection, because given 
$p^*\in \mathfrak{S}_n(231)$, we can recover the unique preimage of $p^*$ under $g$ by keeping the positions and values of the right-to-left minima of $p^*$ fixed and writing the remaining entries in the remaining slots in increasing order. 
\end{proof}

Let $\pi\in \mathfrak{S}_n(321,1)$. Let $cba$ be the only copy of 321 in $\pi$, and  
let 
\[\pi = \pi_1 \ c \ \pi_2 \ b \ \pi_3 \ a \  \pi_4 ,\] where the $\pi_r$ are blocks of entries of $\pi$. 

Then the only entry to the left of $b$ that is larger than $b$ is $c$, and the only entry to the right of $b$ that is smaller than $b$ is $a$, otherwise $cba$ would not be the only 321-copy in $\pi$. For the same reason, all entries in $\pi_2$ are smaller than $a$, and all entries in $\pi_3$ are greater than $c$. Therefore, the permutation $\sigma_1=\pi_1 b \pi_2 a$ is a 321-avoiding permutation on the set $\{1,2,\,\cdots ,b\}$, and
the permutation $\sigma_2 = c \pi_3 b \pi_4$  is a 321-avoiding permutation on the set $\{b,b+1,\,\cdots,n\}$. Let us add 1 to each entry of $\sigma_2$, yielding the permutation
$\sigma_2'$, which is a 321-avoiding permutation on the set $\{b+1,b+2,\,\cdots,n+1\}$. 

Finally, we define $f(\pi)$ as the concatenation of $g(\sigma_1)$, the entry $n+2$, and $g(\sigma_2')$, where $g:\mathfrak{S}_n(321) \rightarrow \mathfrak{S}_n(231)$ is the
reverse Simion-Schmidt bijection whose existence we proved in Proposition \ref{simion-sch}. 

\begin{example} \label{expi}
Let $\pi = 25147386$. Then $cba=543$, $(\pi_1,\pi_2,\pi_3,\pi_4)=(2,1,7,86)$, so $\sigma_1 =2413 $, and $\sigma_2=57486$, so $\sigma_2'=68597$. Therefore, $g(\sigma_1)=4213$ and $g(\sigma_2') =  96587$, so 
\[f(\pi) = 4\ 2 \ 1 \ 3 \  10 \ 9 \ 6 \ 5  \ 8 \ 7 .\]
\end{example}

\begin{proposition} \label{prop:321-1-231}
The map $f:\mathfrak{S}_n(321,1) \to \mathfrak{S}_{n+2} (231) $ is an injection. 
\end{proposition}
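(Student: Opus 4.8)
The plan is to prove injectivity by exhibiting an explicit left inverse: a procedure that recovers $\pi$ from $f(\pi)$. First I would recall, from the analysis preceding the statement, the structural facts I intend to reverse: $\sigma_1=\pi_1 b\pi_2 a$ is a $321$-avoiding permutation on $\{1,\dots,b\}$ (with $b$ its maximum and $a$ its last entry), and $\sigma_2=c\pi_3 b\pi_4$ is a $321$-avoiding permutation on $\{b,\dots,n\}$ (with $c$ its first entry and $b$ its minimum). The two blocks share exactly the value $b$, which is why $\sigma_2'$ sits on $\{b+1,\dots,n+1\}$ and why $f(\pi)=g(\sigma_1)\,(n+2)\,g(\sigma_2')$ has length $b+1+(n-b+1)=n+2$ and uses each value of $\{1,\dots,n+2\}$ exactly once.

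Given $w=f(\pi)$, the inversion begins by locating the entry $n+2$; it is the unique maximum of $w$, so its position is forced, and deleting it splits $w$ into a left block $L$ and a right block $R$ with $w=L\,(n+2)\,R$. Since $g$ preserves length and value set, $L=g(\sigma_1)$ lies on $\{1,\dots,b\}$ and $R=g(\sigma_2')$ lies on $\{b+1,\dots,n+1\}$; hence $b$ is recovered as the length of $L$ (equivalently as $\max L$), and the two value ranges are automatically disjoint and consistent. I then apply the inverse of the bijection of Proposition \ref{simion-sch} --- keep the right-to-left minima fixed and refill the remaining slots in increasing order, a rule that depends only on relative order and so applies verbatim to a sequence of distinct values --- to $L$ to obtain $\sigma_1$ and to $R$ to obtain $\sigma_2'$, and I subtract $1$ from every entry of $\sigma_2'$ to recover $\sigma_2$.

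It remains to rebuild $\pi$ from $\sigma_1$ and $\sigma_2$. In $\sigma_1$ the maximum is $b$ and the last entry is $a$; these, together with $\pi_1$ (the entries before $b$) and $\pi_2$ (the entries strictly between $b$ and the last entry), are therefore determined. In $\sigma_2$ the first entry is $c$ and the minimum is $b$; locating $b$ splits off $\pi_3$ (the entries between $c$ and $b$) and $\pi_4$ (the entries after $b$). Emitting $\pi_1\,c\,\pi_2\,b\,\pi_3\,a\,\pi_4$ then reproduces $\pi$. Since this procedure returns $\pi$ for every $\pi\in\mathfrak{S}_n(321,1)$, the map $f$ is injective.

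I expect the main care to be needed in the bookkeeping around the shared value $b$: one must confirm that the value $|L|$ coming from the left block genuinely equals the minimum of the recovered $\sigma_2$, so that the two halves glue consistently, and that $a$, $b$, $c$ occupy the claimed extremal positions after $g$ and $g^{-1}$ are applied. The latter uses that the final position of any permutation is always a right-to-left minimum (so $g$ fixes $a$ at the end of $\sigma_1$) and that $g$ preserves value sets (so it fixes the maximum of $\sigma_1$ and the minimum of $\sigma_2$). A secondary point, needed for $f$ to land in the asserted codomain $\mathfrak{S}_{n+2}(231)$, is to check that $f(\pi)$ avoids $231$: the blocks $g(\sigma_1)$ and $g(\sigma_2')$ avoid $231$ by Proposition \ref{simion-sch}, and no $231$-copy can use entries from both blocks, since every entry of $L$ precedes and is smaller than every entry of $(n+2)R$, so a straddling copy would force its final ``$1$'' to lie on the right and hence exceed its leading ``$2$'' on the left, contradicting the pattern $231$.
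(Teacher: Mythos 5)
Your proof is correct and takes essentially the same approach as the paper: the paper's (much terser) argument likewise splits $f(\pi)$ at the maximal entry $n+2$, identifies the left and right blocks as $g(\sigma_1)$ and $g(\sigma_2')$, and concludes from the injectivity of $g$. The details you supply --- the explicit reconstruction of $\pi$ from $(\sigma_1,\sigma_2)$ using the extremal positions of $a$, $b$, $c$, and the check that $f(\pi)$ avoids $231$ --- are precisely the steps the paper declares ``clear'' or leaves to the reader, so they strengthen rather than alter the argument.
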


\begin{proof} It is clear that $f(\pi) \in  \mathfrak{S}_{n+2} (231) $.  We recover $g(\sigma_1)$ and $g(\sigma_2')$ as the subsequences of entries to the left of $n+2$ and to the right of $n+2$, respectively. The result follows since $g$ is injective. 
\end{proof}

It is the following property of the map $f$ that will make it useful for us in the next section. Let $\pi_3'$ (resp. $\pi_4'$) denote the image of $\pi_3$ (resp. $\pi_4$) in $\sigma_2'$, that is, we obtain  $\pi_3'$ (resp. $\pi_4'$) by adding 1 to each entry of $\pi_3$ (resp. $\pi_4$). 

\begin{proposition} \label{rilmin}
The right-to-left minima of $f(\pi)$ are, from right to left, the set of right-to-left minima in $\pi_4'$ with their positions shifted right by $2$, the entry $b+1$ in position $\pi^{-1}(a)+2$, the entry $a$ in position $\pi^{-1}(b)$, and the subset of right-to-left minima of $\pi$ that are located in $\pi_1$ or $\pi_2$, with their positions unchanged.
\end{proposition}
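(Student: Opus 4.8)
The plan is to read off the right-to-left minima of $f(\pi)$ directly from the factorization $f(\pi)=g(\sigma_1)\,(n+2)\,g(\sigma_2')$. Label the four contributions listed in the statement, from right to left as written, by (1)--(4). The one property of $g$ that I need is that it preserves right-to-left minima: by construction $g$ fixes the positions and values of the right-to-left minima of its argument and creates no new ones, so $g(\sigma_1)$ and $g(\sigma_2')$ have exactly the same right-to-left minima, in the same positions and with the same values, as $\sigma_1$ and $\sigma_2'$ respectively. First I would localize the right-to-left minima of $f(\pi)$ to the two blocks. Because $\sigma_1$ is a permutation of $\{1,\dots,b\}$ while $\sigma_2'$ is a permutation of $\{b+1,\dots,n+1\}$, every entry of the left block is smaller than $n+2$ and than every entry of the right block, and $n+2$ is the global maximum placed in a non-final position. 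Hence an entry of $g(\sigma_1)$ is a right-to-left minimum of $f(\pi)$ if and only if it is one within $g(\sigma_1)$, an entry of $g(\sigma_2')$ is one if and only if it is one within $g(\sigma_2')$, and $n+2$ never is. This reduces the problem to describing the right-to-left minima of $\sigma_1$ and of $\sigma_2'$ and then translating their positions into $f(\pi)$.

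For $\sigma_1=\pi_1\,b\,\pi_2\,a$, the two distinguished entries are easy: $b$ is the maximum of $\sigma_1$ in a non-final position, so it is never a right-to-left minimum, whereas the last entry $a$ always is, and following it through $g$ and the concatenation places it in position $\pi^{-1}(b)$ of $f(\pi)$, which is (3). The crux of this step is to show that an entry of $\pi_1$ or of $\pi_2$ is a right-to-left minimum of $\sigma_1$ if and only if it is one of $\pi$. Here the value constraints established before the statement are exactly what is needed: for $x\in\pi_2$, the part of $\pi$ to the right of $x$ beyond the remainder of $\pi_2$ (namely $b$, then $\pi_3$, $a$, $\pi_4$) has minimum $a$, which exceeds every entry of $\pi_2$, and the same value $a$ is the minimum of what follows the remainder of $\pi_2$ in $\sigma_1$; for $x\in\pi_1$ the relevant tail-minimum is $\min(\pi_2\cup\{a\})$ in both $\pi$ and $\sigma_1$, since $c$, $b$ and all of $\pi_3,\pi_4$ are too large to affect it. As the entries of $\pi_1$ and of $\pi_2$ occupy identical positions in $\pi$ and in $\sigma_1$, these minima stay put and give (4).

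For $\sigma_2'=(c+1)\,\pi_3'\,(b+1)\,\pi_4'$ the situation is dual: $b+1$ is the global minimum of $\sigma_2'$ and lies to the right of $c+1$ and of all of $\pi_3'$, so it is a right-to-left minimum while no entry to its left is, and every entry of $\pi_4'$ lies to the right of $b+1$, so such an entry is a right-to-left minimum of $\sigma_2'$ if and only if it is one within $\pi_4'$ (and adding $1$ to the entries of $\pi_4$ does not change which these are). It then remains to compute offsets: the block $g(\sigma_2')$ begins in position $|\sigma_1|+2$ of $f(\pi)$, and a short count of the lengths of $\pi_1,\pi_2,\pi_3,\pi_4$ shows that this moves the right-to-left minima coming from $\pi_4'$ exactly two positions to the right of their positions in $\pi$, giving (1), and places $b+1$ in position $\pi^{-1}(a)+2$, giving (2). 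Comparing the ranges of positions occupied by (4), (3), (2), (1) then confirms that they already appear in right-to-left order.

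The main obstacle, and the only step that is not mechanical, is the equivalence in the second paragraph: that the passage from $\pi$ to $\sigma_1$ neither creates nor destroys a right-to-left minimum among the entries of $\pi_1$ and $\pi_2$. The localization to the two blocks, the treatment of the four distinguished entries $a$, $b$, $c$, $b+1$, and the arithmetic of positions are all routine once the value constraints on $\pi_1,\pi_2,\pi_3,\pi_4$ are in hand, so I would invest the most care in making the tail-minimum comparison completely rigorous, paying attention to the degenerate cases in which one or more of the blocks $\pi_1,\pi_2,\pi_3,\pi_4$ is empty.
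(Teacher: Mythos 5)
Your proof is correct and takes essentially the same approach as the paper: use the fact that $g$ preserves the positions and values of right-to-left minima to reduce the problem to locating the right-to-left minima of $\sigma_1\,(n+2)\,\sigma_2'$, then read these off from the value constraints ($\sigma_1$ lies on $\{1,\dots,b\}$, $\sigma_2'$ on $\{b+1,\dots,n+1\}$, all entries of $\pi_3'$ and $\pi_4'$ exceed $b+1$, and $a<b+1$). The paper's own proof is a two-sentence compression of exactly this argument; your tail-minimum comparison for entries of $\pi_1$ and $\pi_2$ merely makes explicit a step the paper leaves to the reader.
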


\begin{proof} As $g$ keeps the right-to-left minima fixed, it suffices to locate the right-to-left minima of the permutation $\sigma_1 (n+2) \sigma_2'$. Then our statement follows from the
fact that all entries in $\pi_4'$ and $\pi_3'$ are larger than $b+1$, and that $a<b+1$. 
\end{proof}

\begin{example} Consider the permutation $\pi$ of Example \ref{expi}. The right-to-left mimina of $\pi$ are $6$, $a=3$, and $1$, at positions $8$, $\pi^{-1}(a)=6$, and $3$. Proposition \ref{rilmin} states that the right-to-left minima of $f(\pi)$ are, from right to left, $7=6+1$, $b+1=5$, $a=3$, and $1$, in positions $10=8+2$, $8=6+2=\pi^{-1}(a)+2$, $4=\pi^{-1}(b)$, and $3$, respectively.  We can easily verify that these 
are indeed the right-to-left minima of $f(\pi)$. 
\end{example}

\section{The case of general $k$} \label{sec:kisany}

Let $p=p_1p_2\cdots p_n$ be a permutation. Define the \emph{co-rank} of an entry $p_i$ as the length of the longest decreasing subsequence starting at $p_i$. For example, if $p=361254$, then entries 1, 2, and 4 have co-rank 1, entries 3 and 5 have co-rank 2, and entry 6 has co-rank 3.  In general, right-to-left minima are exactly the entries of co-rank 1. Entries of the same co-rank form an increasing subsequence in $p$. If $p\in \mathfrak{S}_n(k\cdots 21)$, then the co-rank of each entry in $p$ is at most $k-1$, while if $p\in \mathfrak{S}_n (k\cdots 21,1)$, then exactly one entry in $p$ has co-rank $k$.  

In what follows, we will use a geometric representation of a permutation $p=p_1 p_2 \cdots p_n$ as a \emph{permutation diagram}, i.e. a set of points $\{(i,p_i)\mid 1\le i\le n\}$, where we think of the first coordinate as increasing from west to east, and the second coordinate as increasing from south to north. In particular, we say that the entry $p_i$ is \emph{northwest} of the entry $p_j$ if $i<j$ and $p_i>p_j$.

\begin{theorem} \label{maininj}
Let $k\geq 3$. Then there exists an injection \[F_k: \mathfrak{S}_n(k\cdots 21,1) \rightarrow \mathfrak{S}_{n+2} ((k-1)k\cdots 21) .\]
\end{theorem}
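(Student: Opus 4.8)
The plan is to generalize every ingredient of the $k=3$ construction of Section \ref{kisthree}, using co-rank in place of the ad hoc analysis of the single $321$-copy. First I would locate the unique entry of co-rank $k$; this is the top $d_1$ of the unique copy $d_1 > d_2 > \cdots > d_k$ of $k\cdots 21$, and one checks that $d_j$ has co-rank $k-j+1$. The analogue of the structural observations made for $cba$ becomes: because $d_1\cdots d_k$ is the only copy, there is no $(k-1)$-term decreasing subsequence lying (positionally to the right of, and in value below) $d_2$ beyond $d_3\cdots d_k$ itself, and a symmetric constraint holds to its northwest. I would package these into a lemma describing, for each co-rank, where entries may sit relative to the diagram points $d_1,\dots,d_k$.

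Next I would define the split around the value $d_2$, mirroring $\sigma_1=\pi_1 b\pi_2 a$ and $\sigma_2=c\pi_3 b\pi_4$. Let $\sigma_1$ be the subsequence of entries of value at most $d_2$, with $d_2$ moved into the slot vacated by $d_1$, and let $\sigma_2$ be the subsequence of entries of value at least $d_2$ (keeping $d_1$ and $d_2$ in place); then shift $\sigma_2$ up by $1$ to get $\sigma_2'$. The structural lemma should give that $\sigma_1$ is a permutation of $\{1,\dots,d_2\}$ and $\sigma_2$ of $\{d_2,\dots,n\}$, that each avoids $k\cdots 21$ (a $k$-term decreasing subsequence inside a piece would be a second copy of $k\cdots 21$ in $\pi$, and the repositioning of $d_2$ creates no new one by the constraints above), and that the pieces overlap in the single value $d_2$, so their combined length is $n+1$.

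The output is then $F_k(\pi)=g_k(\sigma_1)\,(n+2)\,g_k(\sigma_2')$, where $g_k:\mathfrak{S}_m(k\cdots 21)\to\mathfrak{S}_m((k-1)k\cdots 21)$ generalizes the reverse Simion-Schmidt bijection $g$ of Proposition \ref{simion-sch}: it fixes right-to-left minima and refills the remaining slots greedily so as never to create a forbidden configuration. Target-avoidance is then the crossing argument of Proposition \ref{prop:321-1-231}, now driven entirely by value separation: $g_k(\sigma_1)$ uses values $\{1,\dots,d_2\}$, $g_k(\sigma_2')$ uses $\{d_2+1,\dots,n+1\}$, and $n+2$ is the global maximum in the middle. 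In any hypothetical copy of $(k-1)k\cdots 21$ the minimum entry is last and the maximum is second; if the copy met the right block its minimum would exceed some left-block entry, and if it used $n+2$ its decreasing tail of values below $d_2$ would have to lie in the right block, whose values all exceed $d_2$ — both impossible, so every copy lies inside a single block, where there is none. Injectivity follows as before: the entry $n+2$ marks the boundary, the two blocks recover $\sigma_1$ and $\sigma_2'$ since $g_k$ is invertible, and $\pi$ is rebuilt by undoing the shift and gluing the pieces along their shared value $d_2$, whose true position is read off from $\sigma_2$.

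The main obstacle is the construction and verification of $g_k$: I must show that the greedy refilling is a bijection from $\mathfrak{S}_m(k\cdots 21)$ onto $\mathfrak{S}_m((k-1)k\cdots 21)$ and that it fixes right-to-left minima. Existence of such a bijection is guaranteed by the shape-Wilf-equivalence of $12$ and $21$, since the two patterns differ only by transposing their two largest entries while sharing the same decreasing suffix in the same region; but I need an explicit version whose interaction with co-rank is controlled, both to make the crossing argument go through and to support the right-to-left-minima bookkeeping that Section \ref{sec:kisany} will require (the analogue of Proposition \ref{rilmin}). A secondary difficulty is proving the structural lemma in full — in particular that moving $d_2$ into the slot of $d_1$ introduces no new copy of $k\cdots 21$ — and confirming that the gluing step is genuinely the inverse of the split.
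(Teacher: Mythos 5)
There is a genuine gap, and it is fatal to the proposed construction: the map you define is not injective for $k\geq 4$. Your $F_k$ splits the \emph{whole} permutation by value around $d_2$ and outputs the concatenation $g_k(\sigma_1)\,(n+2)\,g_k(\sigma_2')$, which destroys all information about how the positions of the small values (those $\le d_2$) interleave with the positions of the large values (those $>d_2$). For $k=3$ this loss is harmless, because the exactly-one-copy structure is rigid: every entry below $b$ other than $a$ lies to the left of $b$, and every entry above $b$ other than $c$ lies to the right of $b$, so the pair $(\sigma_1,\sigma_2)$ determines $\pi$ (this rigidity is exactly what the paper's Section~\ref{kisthree} analysis establishes, and it is why relocating $b$ into the slots of $c$ and of $a$ suffices to encode everything). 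For $k\ge 4$ no such rigidity holds: entries of low co-rank may interleave freely with the core of the permutation. Concretely, take $k=4$ and $p=43215$, $p'=43251$, both in $\mathfrak{S}_5(4321,1)$ with unique copy $4321$ and $d_2=3$. In both cases the subsequence of values $\le 3$, with $d_2=3$ moved into the slot of $d_1=4$, is $\sigma_1=321$, and the subsequence of values $\ge 3$ is $\sigma_2=435$; hence both permutations are sent to $g_4(321)\,7\,g_4(546)$, whatever the bijection $g_4$ is. Your proposed reconstruction (``gluing the pieces along their shared value $d_2$'') only recovers where $d_1$ sits among the small values and where $d_2$ sits among the large values; it cannot decide whether the entry $5$ precedes or follows the entries $2$ and $1$, which is precisely what distinguishes $p$ from $p'$.

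The paper's proof avoids this trap by splitting by \emph{co-rank} rather than by value, and by operating in situ rather than by concatenation. It takes $\pi$ to be the subsequence of entries of co-rank $k$, $k-1$, or $k-2$ (the ``blue'' entries), observes that $\pi\in\mathfrak{S}_m(321,1)$, and replaces $\pi$ inside $p$ by $f(\pi)$, where $f$ is the already-constructed $k=3$ injection of Proposition~\ref{prop:321-1-231}; the remaining ``red'' entries (co-rank $\le k-3$) stay in their original positions, with values shifted by $1$ where necessary. The interleaving is thus carried along untouched, and injectivity follows because in the image one can still identify the blue entries (those of co-rank $\ge k-2$) and invert $f$ on them; avoidance follows because the top three entries of any would-be copy of $(k-1)k\cdots 21$ would form a $231$ inside the $231$-avoiding $f(\pi)$, using that no red entry is northwest of a blue one (a property the construction preserves, via the right-to-left minima bookkeeping of Proposition~\ref{rilmin}). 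Note also that this approach makes your self-identified ``main obstacle'' --- building an explicit right-to-left-minima-preserving bijection $g_k:\mathfrak{S}_m(k\cdots 21)\to\mathfrak{S}_m((k-1)k\cdots 21)$ --- simply disappear: only the $k=3$ reverse Simion--Schmidt map $g$ is ever needed. The missing idea in your proposal is not $g_k$; it is keeping the low-co-rank entries in place.
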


\begin{proof}
For $k=3$, the map $F_3$ is the map $f$ that we defined in Proposition \ref{prop:321-1-231}. Now let us assume that $k>3$. Let $p\in  \mathfrak{S}_n(k\cdots 21,1)$, and let $\pi$ be the subsequence of entries in $p$ of co-rank $k$, $k-1$, or $k-2$. Then $\pi \in  S_m(321,1) $ for some $m<n$. Call the entries of $\pi$ \emph{blue}.  Let $\tau$ denote the rest of $p$, and call the entries in $\tau$ \emph{red}. Notice that no red entry in $p$ is to the northwest of any blue entry. 

We define $F_k(p)$ by replacing $\pi$ with $f(\pi)$ in $p$ in two steps. First, we replace the subsequence $\pi = \pi_1 \ c \ \pi_2 \ b \ \pi_3 \ a \  \pi_4$ of $p$ with the subsequence $h(\pi) = \pi_1 \ b \ \pi_2 \ a \ n+2  \ c+1 \  \pi_3' \ b+1 \  \pi_4'$ and adjust $\tau$ as follows:
\begin{itemize}
\item replace the entries $c$, $b$, and $a$, respectively, with the entry $b$, block $a\, (n+2)\,(c+1)$, and the entry $b+1$, respectively, and color the new entries, except for $n+2$, blue;
\item add 1 to every entry in $\pi_3$ and $\pi_4$ (to obtain $\pi_3'$ and $\pi_4'$) and color the new entries blue;
\item add 1 to every entry of $\tau$ greater than $b$ and color the new entries red.
\end{itemize}

Then we replace the subsequence $h(\pi)$ with $f(\pi)$ by applying the map $g$ to the subsequences $\pi_1\, b\, \pi_2\, a$ and $(c+1)\, \pi_3'\, (b+1)\, \pi_4'$ of $h(\pi)$. This yields $F_k(p)$. Note that this step preserves the positions and values of the right-to-left minima of $h(\pi)$, since there are no right-to-left minima in $h(\pi)$ between the right-to-left minima $a$ and $b+1$.

Observe that after this operation there is still no red entry northwest of a blue entry. To show this, it suffices to prove that no red entry is northwest of a blue entry that is a right-to-left minimum of $f(\pi)$, or equivalently, of $h(\pi)$. This is clearly true for all the right-to-left minima in $h(\pi)$ other than $a$ and $b+1$.

If a red entry $x\in F_k(p)$ is to the northwest of $b+1$ in $F_k(p)$, then $x$ would be to the northwest of $a$ in $p$, a contradiction. If a red entry $y$ is to the northwest of $a$ in $F_k(p)$, then $y$ is to the northwest of $a$ in $p$, which is again a contradiction. It is similarly easy to verify that the new entries all have rank $k-2$ or higher.

As $f(\pi)\in \mathfrak{S}(231)$, it follows that $F_k(p) \in  S_{n+2} ((k-1)k\cdots 21)$. Indeed, if $F_k(p)$ contained a copy of $(k-1)k\cdots 21$, then it would contain one whose
last entry has co-rank at least 1, the one before that has co-rank at least 2, and so on, continuing to the fourth entry from the left, which would have co-rank at least $k-3$. The remaining three entries would all have to have a higher co-rank and form a $231$-pattern, but that is impossible since entries of co-rank at least $k-2$ are in $f(\pi)$ and form a $231$-avoiding permutation. 

So, given $F_k(p)$, we can determine the set of red entries, and the set of blue entries in $F_k(p)$, and therefore, in $p$.
This implies that if $F_k(p)=F_k(p^*)$, then $\pi =\pi^*$ and $\tau =\tau^*$ must hold, since the action of $f$ is injective. So $F_k$ is an injection as claimed. 
\end{proof}

\begin{example}
Let $k=4$, and let $p=481593276\in \mathfrak{S}_9(4321,1)$. Then $\tau=126$ and $\pi=4\mathbf{8}\mathbf{5}9\mathbf{3}7$ (where the unique occurrence of the pattern $321$ in $\pi$ is marked in bold), so that $p=\textcolor{blue}{48}\textcolor{red}{1}\textcolor{blue}{593}\textcolor{red}{2}\textcolor{blue}{7}\textcolor{red}{6}$ and $\pi_1=4$, $c=8$, $\pi_2$ is empty, $b=5$, $\pi_3=9$, $a=3$, and $\pi_4=7$. Moreover, in $\tau$, the entries $1$ and $2$ are less than $b$, whereas $6$ is greater than $b$. Therefore, we first replace $\pi$ with $h(\pi)$ as in the proof of Theorem~\ref{maininj} and add 1 to the entries of $\tau$ greater than $b$ to obtain the string
\[
\textcolor{blue}{4\ 5}\ \textcolor{red}{1}\ \textcolor{blue}{3}\ 11\ \textcolor{blue}{9\ 10\  6}\ \textcolor{red}{2}\ \textcolor{blue}{8}\ \textcolor{red}{7},
\]
then replace the subsequences $\pi_1 b \pi_2 a = 453$ and $(c+1)\pi_3'(b+1)\pi_4'=9(10)68$ with $g(\pi_1 b \pi_2 a)=g(453)=543$ and $g((c+1)\pi_3'(b+1)\pi_4')=g(9(10)68)=(10)968$, respectively, so that $f(\pi)=543(11)(10)968$ and
\[
F_4(p)= \textcolor{blue}{5\ 4}\ \textcolor{red}{1}\ \textcolor{blue}{3}\ 11\ \textcolor{blue}{10\ 9\ 6}\ \textcolor{red}{2}\ \textcolor{blue}{8}\ \textcolor{red}{7} \in \mathfrak{S}_{11}(3421).
\]
%
%
\end{example}


\section{The general-$k$ case generalized} \label{sec:rhoisany}

In this section, we will further generalize the case of the anti-identity pattern $k\cdots21$ to a whole class of patterns. For this we will need to define the notions of direct sums, skew-sums and domination. 


Given two permutations $\sigma_1$ and $\sigma_2$, define their \emph{direct sum} $\sigma_1\oplus\sigma_2$ and \emph{skew-sum} $\sigma_1\ominus\sigma_2$ as permutations of length $|\sigma_1|+|\sigma_2|$ such that
\[
\begin{split}
(\sigma_1\oplus\sigma_2)(i)&=
\begin{cases}
\sigma_1(i), & i\le |\sigma_1|,\\
\sigma_2(i-|\sigma_1|)+|\sigma_1|, & i>|\sigma_1|,
\end{cases}\\
(\sigma_1\ominus\sigma_2)(i)&=
\begin{cases}
\sigma_1(i)+|\sigma_2|, & i\le |\sigma_1|,\\
\sigma_2(i-|\sigma_1|), & i>|\sigma_1|.
\end{cases}
\end{split}
\]
For example, $231=12\ominus 1=(1\oplus 1)\ominus 1$ and $321=21\ominus 1 = 1\ominus 1\ominus 1$.

Furthermore, 
we say that a point $(i_1,j_1)$ \emph{dominates} a point $(i_2,j_2)$ if $i_1<i_2$ and $j_1>j_2$, or equivalently, if $(i_1,j_1)$ is northwest of $(i_2,j_2)$. More generally, for a point $P$ and a set of points $S$, we say that $P$ dominates $S$ if $P$ dominates every point in $S$.

Let $k\ge 3$ and let $\rho=(k-3)\cdots 21$, and consider the construction in the previous section. Clearly, $p=k\cdots 21=321\ominus\rho$, and the points of co-rank $k$ are those in the permutation diagram of $\sigma$ that dominate an occurrence of pattern $21\ominus\rho$, while the points of co-rank $k-1$ are those in the permutation diagram of $\sigma$ that dominate an occurrence of $1\ominus\rho$ but not of $21\ominus\rho$, and finally, the points of co-rank $k-2$ are those in the permutation diagram of $\sigma$ that dominate an occurrence of $\rho$ but not of $1\ominus\rho$. This suggests the following generalization of Theorem \ref{maininj}.

\begin{theorem} \label{thm:geninj}
Let $k\geq 3$ and let $\rho\in \mathfrak{S}_{k-3}$. Then there exists an injection 
\[
F_k: \mathfrak{S}_n(321\ominus\rho,1) \to \mathfrak{S}_{n+2} (231\ominus\rho).
\]
\end{theorem}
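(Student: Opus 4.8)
The plan is to re-use the construction $F_k$ of Theorem~\ref{maininj} essentially verbatim, after reinterpreting the three top ``co-rank'' classes through domination, exactly as suggested by the discussion preceding the statement. Call an entry of $p$ \emph{blue} if it dominates an occurrence of $\rho$ in the permutation diagram, and \emph{red} otherwise; let $\pi$ be the blue subsequence and $\tau$ the red one. My first step is to prove that $\pi\in\mathfrak{S}_m(321,1)$, where $m=|\pi|$. If $cba$ is any copy of $321$ among the blue entries, then, since $a$ is blue, $a$ dominates some occurrence $R$ of $\rho$; as $c>b>a$ all lie northwest of $a$, each of them dominates $R$ as well, so $c\,b\,a$ followed by $R$ is an occurrence of $321\ominus\rho$ in $p$. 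Distinct triples $cba$ produce occurrences of $321\ominus\rho$ with distinct leading positions, so two copies of $321$ among the blue entries would force two copies of $321\ominus\rho$ in $p$; since $p$ has exactly one, the blue entries contain at most one $321$. Conversely, the leading three entries of the unique $321\ominus\rho$ in $p$ are themselves blue and form a $321$, so $\pi$ contains exactly one copy of $321$.

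Next I would apply $F_k$ exactly as in the proof of Theorem~\ref{maininj}: writing the unique blue $321$ as $\pi=\pi_1\,c\,\pi_2\,b\,\pi_3\,a\,\pi_4$, I form $h(\pi)$, add $1$ to every red entry exceeding $b$, insert $n+2$, and apply $g$ to the two halves so that $\pi$ is replaced by $f(\pi)$ in place. Before doing so I must re-establish that no red entry of $p$ is northwest of a blue entry: if a red $r$ were northwest of a blue $\beta$, then $\beta$ dominates some occurrence $R$ of $\rho$, whence $r$ dominates $R$ by transitivity and $r$ would itself be blue. Granting this, the proof that the construction preserves ``no red northwest of blue'' is word-for-word the one in Theorem~\ref{maininj}, since it uses only that $f(\pi)$ is $231$-avoiding and that $g$ fixes right-to-left minima.

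To see that $F_k(p)$ avoids $231\ominus\rho$, suppose it contained such a copy. Its leading $231$ lies northwest of the trailing $\rho$, so all three entries of the $231$ dominate that occurrence of $\rho$ and are therefore blue; but the blue entries of $F_k(p)$ are exactly $f(\pi)$, which avoids $231$ by Proposition~\ref{prop:321-1-231}, a contradiction. This argument, and the ensuing injectivity proof, both rely on the intrinsic characterization ``blue $\iff$ dominates $\rho$'' holding in $F_k(p)$. One inclusion is safe: a red entry of $F_k(p)$ that dominated an occurrence of $\rho$ would, by the no-red-northwest-of-blue property, dominate an \emph{all-red} occurrence; since the red subsequence of $F_k(p)$ is order-isomorphic to that of $p$ (its entries are merely shifted in value and never reordered), this would produce a red entry of $p$ dominating $\rho$, a contradiction. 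Granting the reverse inclusion, injectivity follows as in Theorem~\ref{maininj}: from $F_k(p)$ we read off the red and blue subsequences, recover $\tau$ by undoing the value shift and $\pi$ from $f(\pi)$ by injectivity of $f$ (Proposition~\ref{prop:321-1-231}), and reassemble $p$ on the preserved positions.

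The main obstacle is precisely the reverse inclusion: that \emph{every} blue entry of $F_k(p)$ still dominates some occurrence of $\rho$. This is delicate because the map $g$ reorders the non--right-to-left-minimum blue entries, changing their values and positions relative to the fixed red entries, so a blue entry that dominated a particular $\rho$-occurrence in $p$ need not dominate that same one in $F_k(p)$. I expect the resolution to come from the layered description in the paragraph preceding the theorem, namely tracking, for each blue entry, whether it dominates $\rho$, $1\ominus\rho$, or $21\ominus\rho$, and showing that $g$ preserves enough of the southeast structure, in particular the right-to-left minima it fixes, to retain a witnessing $\rho$-occurrence. Making this domination-preservation precise across the value and position shifts of the construction is the key technical point of the proof.
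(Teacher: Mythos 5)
Your construction is the paper's construction: the same coloring (blue $=$ dominates an occurrence of $\rho$), the same replacement of the blue subsequence $\pi$ by $f(\pi)$ with the value shift on $\tau$, and the same recovery scheme for injectivity. Your preliminary verifications are correct and in fact more explicit than the paper's own proof, which consists of two sentences and the assertion that ``the rest of the proof follows exactly as in the proof of Theorem~\ref{maininj}'': your argument that $\pi$ contains exactly one copy of $321$ (distinct blue triples would give distinct copies of $321\ominus\rho$), that no red entry is northwest of a blue entry (transitivity of domination), and that no red entry of $F_k(p)$ can dominate a $\rho$-occurrence (any occurrence it dominated would be forced to be all-red, and the red subsequence is never reordered) are all sound, and the last of these already yields the avoidance of $231\ominus\rho$.

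The genuine gap is the step you yourself flag and leave as an expectation: that every blue entry of $F_k(p)$ still dominates an occurrence of $\rho$. You are right that it is needed --- without it the blue set of $F_k(p)$ has no intrinsic description, and the recovery argument for injectivity does not get off the ground --- but it must be proved, not hoped for; it is exactly the verification the paper compresses into the remark in Theorem~\ref{maininj} that the new entries ``all have rank $k-2$ or higher.'' It closes with tools you already cite. First, strengthen your induction: in $p$, every blue entry dominates an \emph{all-red} occurrence of $\rho$ (if a witness contains a blue entry $\beta'$, replace it by an all-red witness of $\beta'$, which the original entry dominates by transitivity; induct on the number of blue entries southeast of the given entry). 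All-red witnesses survive the construction up to the uniform shifts, since the red subsequence of $F_k(p)$ is order-isomorphic to $\tau$. Second, every entry of $f(\pi)$ that is not a right-to-left minimum of $f(\pi)$ dominates one (the least entry of $f(\pi)$ to its right), so by transitivity it suffices to check the claim for the right-to-left minima of $f(\pi)$, which Proposition~\ref{rilmin} lists exactly: the minima of $\pi_4'$ (shifted by $+2$ in position, $+1$ in value), the entry $b+1$ at position $\pi^{-1}(a)+2$, the entry $a$ at position $\pi^{-1}(b)$, and the minima of $\pi$ lying in $\pi_1$ or $\pi_2$ (unmoved, and of value less than $a$ since $a$ lies to their right in $\pi$). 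Each case is a one-line check against the all-red witness of the corresponding entry of $\pi$: the witness dominated by $a$ in $p$ has values below $a\le b$ and positions beyond $\pi^{-1}(a)$, so its image shifts by $(+2,0)$ and is still dominated both by $a$ at its new, earlier position and by $b+1$ at $\pi^{-1}(a)+2$; the witnesses of minima of $\pi_4$ shift in tandem with their dominating entries (values $y<x$ give $y+1\le x<x+1$); and the witnesses of minima in $\pi_1$, $\pi_2$ have values below $a$, so neither shift can break domination. With this lemma inserted, your avoidance and injectivity arguments go through verbatim and the proof is complete.
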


\begin{proof}
The proof of this theorem parallels that of Theorem \ref{maininj}. The construction in Section \ref{sec:kisany} can be generalized as follows. Let $\rho$ be \emph{any} pattern, and consider $p=321\ominus\rho$. Call any entry of $p$ that dominates an occurrence of $\rho$ \emph{blue}, and call the rest of the entries \emph{red}. As in the Section \ref{kisthree}, let $\pi$ be the subsequence of the blue entries of $p$, and let $\tau$ be the subsequence of the red entries of $p$.

Given a permutation $p\in \mathfrak{S}_n(321\ominus\rho,1)$, we define $F_k(p)$ similarly by replacing the subsequence $\pi$ of the blue entries of $p$ with $f(\pi)$ and adding 1 to the entries of $\tau$ greater than $b$. Then, as before, no red entry is to the northwest of any blue entry, and the rest of the proof follows exactly as in the proof of Theorem~\ref{maininj}.
\end{proof}


\section{Nonrationality} \label{sec:nonrat}

\subsection{Monotone patterns}
In what follows, we will use the following well-known fact that interchanging the first
two entries of a decreasing pattern does not change the number of permutations avoiding that pattern. For the proof of this fact, see Exercise 1 in Chapter 4 of \cite{combperm}. 

\begin{proposition} \label{equality}
For all $k\geq 3$, the equality \[S_n(k\cdots 21)=S_n((k-1)k\cdots 21) \]
holds.
\end{proposition}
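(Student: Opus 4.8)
The statement to prove is the following.

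\medskip

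\noindent\textbf{Proposition.} For all $k\geq 3$, the equality $S_n(k\cdots 21)=S_n((k-1)k\cdots 21)$ holds.

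\medskip

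The plan is to exhibit an explicit bijection between $\mathfrak{S}_n(k\cdots 21)$ and $\mathfrak{S}_n((k-1)k\cdots 21)$ by generalizing the reverse Simion-Schmidt map $g$ from Proposition \ref{simion-sch}, rather than appealing to the black-box reference. Observe that $k\cdots 21$ and $(k-1)k\cdots 21$ differ only by swapping the two largest entries at the front; in co-rank language, a permutation avoids $k\cdots 21$ exactly when every entry has co-rank at most $k-1$, and it avoids $(k-1)k\cdots 21$ exactly when no entry of co-rank $\geq k-1$ sits to the northwest of a $21$-pattern whose bottom point has co-rank $k-2$ (i.e., no ascent-then-long-descent of the prescribed shape occurs). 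The natural strategy is to fix the skeleton given by the entries of low co-rank and rearrange the high-co-rank entries so as to trade one forbidden configuration for the other.

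\medskip

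First I would stratify a permutation $p$ by co-rank, recording the positions and values of all entries of co-rank at most $k-2$; these form the ``frame'' that both patterns share and that the bijection will leave fixed. Second, restricting attention to the entries of co-rank exactly $k-1$ together with the gaps they occupy, I would apply the map $g$ (the reverse Simion-Schmidt bijection of Proposition \ref{simion-sch}) in the appropriate local window: since entries of a fixed co-rank form an increasing subsequence, the passage from the $k\cdots21$-avoidance condition to the $(k-1)k\cdots 21$-avoidance condition is precisely the passage from a $321$-type to a $231$-type constraint on the top two co-rank layers, which is exactly what $g$ accomplishes. Third, I would check that $g$ leaves the co-rank stratification of the lower layers intact, so that the rearranged permutation still avoids $(k-1)k\cdots 21$ and the construction is reversible by applying $g^{-1}$ on the same window.

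\medskip

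The main obstacle I expect is verifying that the local application of $g$ is genuinely well-defined on the top two co-rank layers and does not disturb the co-ranks of the remaining entries: one must confirm that moving the highest-co-rank entries according to $g$ cannot raise the co-rank of any lower entry above $k-2$, nor create a new occurrence of $(k-1)k\cdots 21$ outside the window where $g$ was applied. This is a bookkeeping argument about how longest decreasing subsequences interact across co-rank layers, and it is where the bulk of the care is needed; the invertibility then follows formally from the invertibility of $g$. Since the paper explicitly grants this as a well-known fact with a textbook reference (Exercise 1 in Chapter 4 of \cite{combperm}), I would in practice simply cite that reference and give the co-rank-based bijective sketch above as the conceptual explanation, noting that it dovetails with the $g$-machinery already developed in Sections \ref{kisthree} and \ref{sec:kisany}.
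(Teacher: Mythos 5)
The paper never actually proves this proposition: it invokes it as a well-known fact and cites Exercise 1 in Chapter 4 of \cite{combperm} (it is also the special case $\tau=(k-2)\cdots 21$ of the Babson--West result quoted later as Theorem \ref{babson}). So your closing fallback---``in practice simply cite that reference''---coincides exactly with what the paper does, and to that extent your proposal is fine. The problem is the bijective sketch you present as the substantive content: as written it does not work, and the step you dismiss as ``bookkeeping'' hides errors that are fatal in the setup you chose.

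Concretely: (i) Your characterization of $(k-1)k\cdots 21$-avoidance is wrong. An entry of co-rank $\geq k-1$ northwest of a $21$-pattern whose bottom point has co-rank $k-2$ produces a \emph{decreasing} configuration of length $\geq k$, i.e.\ it witnesses containment of $k\cdots 21$, not of $(k-1)k\cdots 21$; since $(k-1)k\cdots 21$ begins with an ascent, any correct characterization must involve a $12$-pattern (your parenthetical ``ascent-then-long-descent'' is the right intuition, but it contradicts the formal statement preceding it). The correct statements are: $p$ avoids $k\cdots 21$ iff its entries of co-rank $\geq k-2$ form a $321$-avoiding subsequence, and $p$ avoids $(k-1)k\cdots 21$ iff those entries form a $231$-avoiding subsequence. (ii) Your layer split is off by one, and fatally so: you fix all entries of co-rank $\leq k-2$ as the ``frame'' and rearrange only the co-rank-$(k-1)$ layer. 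But entries of a fixed co-rank form an increasing subsequence (as the paper notes), and $g$ is the identity on increasing sequences (every entry of an increasing sequence is a right-to-left minimum), so your map does nothing at all. You cannot simultaneously freeze layer $k-2$ and have it participate in the ``top two layers'' where the $321\to 231$ trade happens; the frame must be co-rank $\leq k-3$, with $g$ applied to the whole co-rank-$\geq k-2$ subsequence. (iii) Even with that fix, the map cannot ``leave the co-rank stratification intact'': the decreasing permutation $n\cdots 21$ avoids $(k-1)k\cdots 21$ and so must be in the image, yet it has entries of every co-rank up to $n$. What is preserved is only the \emph{threshold} split into entries of co-rank $\geq k-2$ versus $\leq k-3$, and proving that preservation is the real work; it follows from the fact that $g$ fixes right-to-left minima, hence suffix minima, hence the ``no red entry northwest of a blue entry'' invariant, exactly as in the proof of Theorem \ref{maininj}. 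With corrections (i)--(iii) your outline does become a valid self-contained bijection proving the proposition, and a nice complement to the paper's citation---but in the form you wrote it, it is not a proof, and only the borrowed citation saves it.
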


If two patterns $q$ and $q'$ satisfy the equality $S_n(q)=S_n(q')$ for all $n$, then we will call them \emph{Wilf-equivalent}. 
We say that a pattern $q$ is \emph{skew-indecomposable} if there do not exist nonempty patterns $\alpha$ and $\beta$ such that $q=\alpha \ominus \beta$; otherwise, we say that $q$ is \emph{skew-decomposable}. If $q$ is skew-decomposable, then there is a unique way to decompose it into maximal blocks of consecutive entries so that each such 
block is skew-indecomposable. We call these blocks the \emph{skew-blocks} of $q$. For instance, $q=35412$ has two skew-blocks, $354$ and $12$. 

\begin{lemma} \label{value}
Let $q=q_1q_2\cdots q_k$ be a pattern such that either $\{1,k\}\neq \{q_1,q_k\}$, or $q$ is Wilf-equivalent to a pattern $v=v_1v_2\cdots v_k$ so that $\{1,k\}\neq \{v_1,v_k\}$. Let $r$ be the convergence radius of the ordinary generating function $A(z)=\sum_{n\geq 0} S_n(q)z^n$ of the sequence $S_n(q)$. Then 
\[
A(r) < \infty.
\]
\end{lemma}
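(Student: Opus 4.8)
The plan is to reduce to a pattern whose avoidance class is closed under direct sums, and then to exploit a one‑point injection, available precisely because of the endpoint hypothesis, to force the associated \emph{sequence} generating function to be strictly subcritical.

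First I would dispose of the Wilf‑equivalence alternative: since Wilf‑equivalent patterns have identical counting sequences, hence the same $A(z)$ and the same $r$, I may replace $q$ by the pattern $v$ furnished by the hypothesis and assume outright that $\{1,k\}\neq\{q_1,q_k\}$. Next I would arrange that $q$ is \emph{sum‑indecomposable}. A permutation of length at least $2$ cannot be simultaneously a direct sum and a skew sum (a prefix cannot carry both the smallest and the largest values), so $q$ is sum‑indecomposable or skew‑indecomposable; as the reverse operation interchanges $\oplus$ and $\ominus$ while preserving $S_n$ and the unordered pair $\{q_1,q_k\}$, I may pass to the reverse of $q$ if necessary and assume $q$ is sum‑indecomposable. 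Being sum‑indecomposable forces $q_1\neq 1$ and $q_k\neq k$, so together with $\{q_1,q_k\}\neq\{1,k\}$ the only remaining restriction is $(q_1,q_k)\neq(k,1)$; in particular $q_k\neq 1$ or $q_1\neq k$.

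Because $q$ is sum‑indecomposable, the class of $q$‑avoiders is closed under direct sums and every avoider factors uniquely into sum‑indecomposable $q$‑avoiders. Writing $I(z)=\sum_{n\ge 1}i_nz^n$ for the generating function of the sum‑indecomposable $q$‑avoiders, this yields
\[
A(z)=\frac{1}{1-I(z)}.
\]
Since $A(z)<\infty$ for $z<r$, we have $I(z)<1$ there, so $\iota:=\lim_{z\to r^-}I(z)\le 1$ exists and $A(r)<\infty$ is equivalent to the strict inequality $\iota<1$; the whole problem is to rule out the critical case $\iota=1$. I would do this with a one‑point injection. If $q_k\neq 1$, appending a new global minimum at the end of any $\sigma\in\mathfrak{S}_n(q)$ produces a length‑$(n+1)$ permutation that still avoids $q$ (a copy of $q$ ending at a new global minimum would force $q_k=1$) and that ends in its minimum, hence is sum‑indecomposable; if instead $q_1\neq k$, prepending a new global maximum does the same. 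Either way this injects $\mathfrak{S}_n(q)$ into the sum‑indecomposable $q$‑avoiders of length $n+1$, so $i_{n+1}\ge S_n$ for all $n\ge 0$, i.e. $I(z)\ge z\,A(z)$. Substituting $A=1/(1-I)$ gives $I(z)\bigl(1-I(z)\bigr)\ge z$ for $0<z<r$, and letting $z\to r^-$ yields $\iota(1-\iota)\ge r>0$, which is impossible if $\iota=1$. Hence $\iota<1$ and $A(r)=1/(1-\iota)<\infty$.

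I expect the main obstacle to be the bookkeeping of the two reductions — checking that after passing to a Wilf‑equivalent pattern and then to the reverse the surviving pattern is genuinely sum‑indecomposable with $(q_1,q_k)\neq(k,1)$ — together with the clean verification that the one‑point map lands among \emph{indecomposable} avoiders, so that it bounds $i_{n+1}$ rather than merely $S_{n+1}$; once $I(z)\ge zA(z)$ is secured, the limiting argument excluding $\iota=1$ is immediate. As a sanity check, for $q=321$ (handled through its Wilf‑equivalent $231$) one has $I(z)=(1-\sqrt{1-4z})/2$, so $\iota=I(1/4)=1/2$ and $A(r)=2$, with $\iota(1-\iota)=1/4=r$ showing that the inequality is sharp.
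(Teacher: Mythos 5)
Your proof is correct, and while it shares the paper's overall skeleton, the key step is genuinely different. Both arguments first use the Wilf-equivalence clause and the reverse map to pass to an indecomposable pattern (you use direct sums, the paper uses skew sums --- a mirror-image difference only), so that avoiders factor uniquely into indecomposable avoiders and $A(z)=1/(1-I(z))$; both then must show the indecomposable generating function stays strictly below $1$ at the radius. The difference is how strictness is obtained. The paper outsources it to two external results from \cite{bona}: the inequality $S_{n,2}(q)\le S_{n,1}(q)$ between avoiders with two skew blocks and one skew block (Lemma 4.2 there), together with the fact that $A$ and $A_1$ have the same radius of convergence (Theorem 6.2 there), and then runs a squaring argument to conclude $A_1(r)<1$. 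You instead prove your own inequality from scratch: the one-point injection (append a new minimum if $q_k\neq 1$, prepend a new maximum if $q_1\neq k$; your endpoint bookkeeping showing that indecomposability plus $\{q_1,q_k\}\neq\{1,k\}$ makes one of these available is right) gives $i_{n+1}\ge S_n$, hence $I(z)\ge zA(z)$ and $I(z)\bigl(1-I(z)\bigr)\ge z$ on $(0,r)$, which rules out the critical case $\iota=1$. Your route buys self-containedness --- in particular you never need the radius-equality theorem, since your limiting argument lives entirely on $(0,r)$ --- plus the quantitative byproduct $\iota(1-\iota)\ge r$, i.e.\ $r\le 1/4$, which your $q=231$ check shows is sharp; the paper's route buys brevity by citing results already established under exactly this endpoint hypothesis. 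The one point you should make explicit is that $r>0$, which your contradiction uses: this follows from the Marcus--Tardos (Stanley--Wilf) theorem, so it is not a gap, but it deserves a sentence.
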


\begin{proof}
We can assume that $q$ is \emph{skew-indecomposable}.  Indeed, if $q$ is skew-decomposable, we can replace $q$ by its reverse, and that will be
skew-indecomposable, while the other conditions will not be affected. 

Let $S_{n,j}(q)$ be the number of $q$-avoiding permutations of length $n$ that consist of $j$ skew-blocks. It is shown in \cite[Lemma 4.2]{bona} that for all $n$ and all $q$ satisfying the conditions of this lemma, the inequality 
\begin{equation} \label{onevstwo} 
S_{n,2}(q) \leq S_{n,1}(q) 
\end{equation} 
holds. 

Let $A_1(z)=\sum_{n\geq 1} S_{n,1}(q)z^n$, and let $r_1$ be the radius of convergence of $A_1$. Similarly, let $A_2(z)=\sum_{n\geq 2} S_{n,2}(q)z^n$, and note that $A_2(z)=A_1(z)^2$. 

We claim that $A_1(r_1) < 1$. Indeed, let us first assume that $A_1(r_1) > 1$. As $A_1(z)$ has nonnegative real coefficients, it is a monotone increasing function on the interval $(0,r_1)$.  So, there exists a real number $x\in (0,r_1)$ such that $1 < A_1(x) < A_1(r_1)$. As real numbers greater than $1$ increase when squared, this implies
\begin{equation} \label{eq:1vs2ogf}
\sum_{n \geq 1} S_{n,1}(q) x^n =A_1(x) < A_1(x)^2=A_2(x)=\sum_{n \geq 2} S_{n,2}(q) x^n,
\end{equation}
which clearly contradicts (\ref{onevstwo}). If $A_1(r_1)=1$, there is still a contradiction, because the series $A_1(r_1)$ has a positive summand $S_{1,1}(r_1)$ that has no match in the series $A_2(r_1)$, and otherwise each summand of $A_1(r_1)$ is at least as large as the summand of $A_2(r_1)$ with the same power of $r_1$. This implies that
\[
1=A_1(r_1)\ge S_{1,1}(r_1)+A_2(r_1)>A_2(r_1)=A_1(r_1)^2=1,
\] 
which is impossible. Thus, $A_1(r_1) < 1$.

It is proved in \cite[Theorem 6.2]{bona} that $A(z)$ and $A_1(z)$ have the same convergence radius, that is, with the notation of this paper, $r=r_1$. 
Therefore, 
\[
A(r)=\frac{1}{1-A_1(r)} < \infty,
\]
since we have seen that $A_1(r)=A_1(r_1)<1$. 
\end{proof}

Now we are in a position to state and prove the main result of this section.  Recall that $S_n(q,1)$ denotes the number of permutations of length $n$ that contain exactly 
one copy of the pattern $q$. This is not to be confused with the number $S_{n,i}(q)$ of $q$-avoiding permutations of length $n$ that consist of $i$ skew block that we
discussed in the proof of Lemma \ref{value}.

\begin{theorem} \label{nonrat} 
Let  $k\geq 3$, and let $s_{k\cdots 21,1}(z)=\sum_{n\geq 0} S_n(k\cdots 21,1)z^n$, the ordinary generating function of the sequence $S_n(k\cdots 21,1)$. Then $s_{k\cdots 21,1}(z)$ is not rational. 
\end{theorem}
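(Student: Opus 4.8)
The plan is to derive nonrationality from a comparison of growth rates between $S_n(k\cdots 21,1)$ and $S_n(k\cdots 21)$, using the injection $F_k$ from Theorem~\ref{maininj} together with the convergence-radius information from Lemma~\ref{value}. First I would observe that the injection $F_k:\mathfrak{S}_n(k\cdots 21,1)\to\mathfrak{S}_{n+2}((k-1)k\cdots 21)$ immediately gives the inequality $S_n(k\cdots 21,1)\le S_{n+2}((k-1)k\cdots 21)$. By Proposition~\ref{equality}, the right-hand side equals $S_{n+2}(k\cdots 21)$, so we have the clean bound $S_n(k\cdots 21,1)\le S_{n+2}(k\cdots 21)$ for all $n$. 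This at once tells us that the radius of convergence $\rho$ of $s_{k\cdots 21,1}(z)$ is \emph{at least} the radius of convergence $r$ of $A(z)=\sum_n S_n(k\cdots 21)z^n$; conversely, since every permutation counted by $S_n(k\cdots 21)$ can be prepended or modified to produce a permutation with exactly one copy (or more directly, since $\mathfrak{S}_n(k\cdots 21,1)$ is nonempty and grows at least as fast as $S_{n}(k\cdots 21)$ up to a constant), the two radii coincide. The key qualitative fact I want to extract is that $s_{k\cdots 21,1}(z)$ has the \emph{same} radius of convergence $r$ as $A(z)$.

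The crux of the argument is then to examine the behavior of $s_{k\cdots 21,1}(z)$ as $z\to r^-$. The pattern $q=k\cdots 21$ satisfies the hypotheses of Lemma~\ref{value} because its reverse $12\cdots k$ (the identity) has first and last entries $\{1,k\}=\{q_1,q_k\}$ failing the condition, but one checks that the relevant Wilf-equivalent representative—or the skew-indecomposability reduction inside the proof of Lemma~\ref{value}—applies; more carefully, $k\cdots 21$ is itself skew-decomposable, so by the reduction in that lemma we pass to its reverse, and the conclusion $A(r)<\infty$ holds. Thus $A(z)$ is \emph{finite} at its singularity $z=r$. Now I would argue that, by contrast, $s_{k\cdots 21,1}(z)$ must \emph{diverge} as $z\to r^-$, i.e. $s_{k\cdots 21,1}(r)=\infty$. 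The heuristic is that while the avoiders are summable at $r$, the permutations with exactly one copy are genuinely more numerous—by the factor captured in the introduction, where $\lim_n S_n(321,1)/S_n(321)=3$ but with a polynomial correction forcing divergence. Concretely, Noonan's formula \eqref{fnoonan} shows $S_n(321,1)=\frac{3}{n}\binom{2n}{n+3}$, whose generating function diverges at $r=1/4$ precisely because of the extra $\binom{2n}{n+3}\sim 4^n/\sqrt{n}$ behavior, and the analogous asymptotic inflation should persist for general $k$.

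The main obstacle, and the step I expect to require the most care, is establishing the strict divergence $s_{k\cdots 21,1}(r)=\infty$ for general $k$, since unlike the $k=3$ case we have no exact formula. The way to finish is to combine a \emph{lower} bound on $S_n(k\cdots 21,1)$ with the known finiteness $A(r)<\infty$: if $s_{k\cdots 21,1}(z)$ were rational, then, having radius of convergence $r$, its coefficients would satisfy $S_n(k\cdots 21,1)\sim c\,n^d r^{-n}$ for some integer $d\ge 0$ and constant $c$, and in particular $s_{k\cdots 21,1}(r)<\infty$ would be forced only if $d<-1$, which is impossible for a power series with a pole at $r$; more precisely, a rational function has a pole at its radius of convergence, so $s_{k\cdots 21,1}(r)=\infty$ would be automatic, and I must instead exhibit the actual contradiction by comparing the \emph{order} of growth. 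The cleanest route is: show $S_n(k\cdots 21,1)\ge S_n(k\cdots 21)$ (injecting avoiders into single-copy permutations, e.g. by adjoining a forced copy), so that $s_{k\cdots 21,1}(r)\ge A(r)$ is finite-lower-bounded, and simultaneously show the growth of $S_n(k\cdots 21,1)$ strictly exceeds any rational-function tail by invoking that $A(z)$ has a branch-type (algebraic, non-polar) singularity at $r$ while the single-copy count inherits a genuinely stronger singularity. A rational generating function has only polar singularities, and its coefficient asymptotics are of the form $\sum_i P_i(n)\beta_i^{-n}$ with polynomial $P_i$; I would derive a contradiction by showing $S_n(k\cdots 21,1)/\!\left(n^d r^{-n}\right)$ fails to converge to a constant for any fixed $d$, using the square-root-type correction in the avoidance asymptotics that the injection transfers to the single-copy count. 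Thus $s_{k\cdots 21,1}(z)$ cannot be rational.
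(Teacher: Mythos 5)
Your proposal begins on the right track: the injection plus Proposition \ref{equality} gives $S_n(k\cdots 21,1)\le S_{n+2}(k\cdots 21)$, the sandwich argument gives equality of the radii of convergence, and Lemma \ref{value} gives $A(r)<\infty$. But the central analytic claim you then build on is false, and it is precisely the opposite of the fact the argument needs. You assert that $s_{k\cdots 21,1}(z)$ must diverge at $z=r$, citing Noonan's formula as evidence; however, $S_n(321,1)=\frac{3}{n}\binom{2n}{n+3}\sim \frac{3\cdot 4^n}{\sqrt{\pi}\,n^{3/2}}$, so $\sum_n S_n(321,1)\,4^{-n}$ behaves like $\sum_n 3\pi^{-1/2}n^{-3/2}$, which \emph{converges}: the single-copy generating function is finite at $r=1/4$, just like the Catalan generating function. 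In general, the coefficient-wise bound $S_n(k\cdots 21,1)\le S_{n+2}(k\cdots 21)$ that you already derived, combined with $A(r)<\infty$, forces $s_{k\cdots 21,1}(r)\le \bigl(A(r)-r-1\bigr)/r^2<\infty$. This finiteness is the whole point: by Pringsheim's theorem, a rational function with nonnegative coefficients has a pole at its radius of convergence and hence diverges there, so finiteness at $r$ is itself the contradiction. Your proposal aims to prove divergence, a statement that is (a) false and (b) even if true would be perfectly consistent with rationality, as you yourself notice midway through when you observe that $s_{k\cdots 21,1}(r)=\infty$ ``would be automatic'' for a rational function.

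The fallback you sketch at the end --- comparing coefficient asymptotics against the polynomial-times-exponential form of rational coefficients --- could in principle be made rigorous (it is close to how the paper proves \emph{nonalgebraicity} for even $k$, using Regev's asymptotics and the two-sided bound \eqref{chainin}), but your sketch does not carry it out. You never establish a two-sided bound $c(k-1)^{2n}n^{-m}\le S_n(k\cdots 21,1)\le C(k-1)^{2n}n^{-m}$: the injection gives only the upper bound, and the lower bound requires $p\oplus k\cdots 21$ together with Regev's theorem (your proposed inequality $S_n(k\cdots 21,1)\ge S_n(k\cdots 21)$ is false for small $n$, e.g.\ $S_3(321,1)=1<5=S_3(321)$; the correct form is $S_n(k\cdots 21,1)\ge S_{n-k}(k\cdots 21)$). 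Moreover, ``fails to converge to a constant for any fixed $d$'' is not a contradiction with rationality, since several poles of equal modulus can make rational coefficients oscillate (e.g.\ $1/(1-z^2)$ has coefficients $1,0,1,0,\dots$). The repair is short: drop the divergence claim, keep your upper bound, conclude $s_{k\cdots 21,1}(r)<\infty$, and invoke Pringsheim. One further small fix: your justification for applying Lemma \ref{value} is garbled --- the reverse of $k\cdots 21$ is $12\cdots k$, which still begins and ends with $\{1,k\}$; the hypothesis holds because $k\cdots 21$ is Wilf-equivalent, via Proposition \ref{equality}, to $(k-1)k(k-2)\cdots 21$, whose first and last entries are $\{k-1,1\}\neq\{1,k\}$.
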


\begin{proof} 
Theorem \ref{maininj} shows that $S_n(k\cdots 21,1)\leq S_{n+2}((k-1)k\cdots 21)$, while it is straightforward to see that $S_n(k\cdots 21,1)\geq S_{n-k}(k\cdots 21)=S_{n-k}((k-1)k\cdots 21)$, where the equality follows from Proposition \ref{equality}, and the second inequality follows from the observation that if $p\in \mathfrak{S}_{n-k}(k\cdots 21)$ then $p\oplus k\cdots 21\in \mathfrak{S}_n(k\cdots 21,1)$. These inequalities imply
that the exponential order of the sequence $S_n(k\cdots 21,1)$ is the same as that of the sequence
$S_n(k\cdots 21)$, and the latter is well-known \cite{regev} to be $(k-1)^{2n}$. 

Therefore, the generating functions $s_{k\cdots 21,1}(z)$ and $s_{k\cdots 21}(z)$ of these sequences have the same convergence radius $R=1/(k-1)^2$. 

As the decreasing pattern $k(k-1)\cdots 21$ satisfies the conditions of Lemma \ref{value}, it follows that
\begin{equation} \label{finite}
s_{k\cdots 21}(R) < \infty .
\end{equation}

This implies that
\[s_{k\cdots 21,1}(R) \leq \sum_{n\geq 0} S_{n+2}(k\cdots 21) R^n =\frac{s_{k\cdots 21}(R) -R -1}{R^2} < \infty,\]
The first inequality holds because each coefficient of $s_{k\cdots 21,1}(z)$ is at most as large as the corresponding coefficient of the generating
function $ \sum_{n\geq 0} S_{n+2}(k\cdots 21) R^n$.  

If $s_{k\cdots 21,1}(z)$ were a rational function, then all its singularies would be poles. As all coefficients of $s_{k\cdots 21,1}(z)$ are nonnegative real numbers, we know by Pringsheim's Theorem \cite[Theorem IV.6]{Flajolet} that $R$ itself  is a singularity of $s_{k\cdots 21,1}(z)$. However, that would imply that $R$ is a pole, and so
$s_{k\cdots 21,1}(R)=\infty$. Therefore, $s_{k\cdots 21,1}(z)$ is not a rational function. 
\end{proof} 

\subsection{A more general result}
In order to generalize Theorem \ref{nonrat} for the class of patterns discussed in Section \ref{sec:rhoisany}, we need the following generalization of 
Proposition \ref{equality}. 

\begin{theorem}[\cite{babson}] \label{babson}
Let $\tau\in \mathfrak{S}_{n-2}$ be any pattern. Then for all nonnegative integers $n$, the equality $S_n(21 \ominus \tau) = S_n(12\ominus \tau)$
holds.
\end{theorem}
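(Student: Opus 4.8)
The plan is to recognize this as a Wilf-equivalence that persists under skew-summing a fixed block, and to prove it by reducing to a \emph{shape}-level equivalence between the two-entry patterns $12$ and $21$, which is then transported across the sum with $\tau$. A first useful observation motivating this route: a permutation avoids $21\ominus\tau$ exactly when every entry that dominates an occurrence of $\tau$ is a left-to-right maximum (if such a dominating entry $B$ had any entry $A$ to its northwest, then $A$, $B$, and the dominated copy of $\tau$ would form $21\ominus\tau$, and conversely). Thus the constraint only depends on the configuration of $p$ lying weakly below and to the right of its left-to-right maxima, which is precisely the kind of condition that a Ferrers-board-indexed bijection is designed to handle.

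Concretely, I would first convert the skew-sums into direct sums by complementation. Writing $q^c$ for the complement of $q$, one has the identity $(\alpha\ominus\beta)^c=\alpha^c\oplus\beta^c$ (complementation is the vertical reflection of the diagram, which turns the northwest block of a skew-sum into the southwest block of a direct sum), together with $21^c=12$. Since complementation is an involution on $\mathfrak{S}_n$ carrying occurrences of any pattern $q$ bijectively to occurrences of $q^c$, it preserves avoidance counts, so $S_n(21\ominus\tau)=S_n(12\oplus\tau^c)$ and $S_n(12\ominus\tau)=S_n(21\oplus\tau^c)$. As $\tau$ ranges over $\mathfrak{S}_{k-2}$, so does $\mu:=\tau^c$, and the theorem becomes equivalent to the assertion that $S_n(12\oplus\mu)=S_n(21\oplus\mu)$ for every pattern $\mu$.

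The engine for the direct-sum form is the notion of \emph{shape}-Wilf-equivalence (due to West, and in the form I need to Backelin, West, and Xin): two patterns are shape-Wilf-equivalent if, for every Ferrers board, they are avoided by equally many full non-attacking rook placements (transversals) of that board, where a pattern occurs only when its rooks sit in a subrectangle contained in the board. The plan has two ingredients. The base case is that $12$ and $21$ are shape-Wilf-equivalent; I would prove this by an explicit local involution on transversals, scanning the board and replacing each witnessed $21$ by the corresponding $12$ in a canonical order, checking that the move stays within the (staircase) board and that the process is reversible. The second ingredient is the composition principle: shape-Wilf-equivalence of the leading block upgrades to ordinary Wilf-equivalence after taking a direct sum with an arbitrary pattern $\mu$, which applied to $12$ and $21$ yields $12\oplus\mu\sim 21\oplus\mu$ and hence the theorem.

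I expect the composition principle to be the main obstacle, not the base case. The difficulty is that an occurrence of $12\oplus\mu$ (respectively $21\oplus\mu$) need not confine its $\mu$-part to a single rectangle determined in advance: the boundary between the leading pair and the copy of $\mu$ shifts with the permutation, so a single global swap of the kind used in the base case does not respect occurrences of the whole pattern. The standard resolution is to induct on the Ferrers shape cut out by the leading block, recording, for each admissible ``top'' configuration, the staircase region available to the remaining $\mu$-pattern, and to apply the base-case equivalence shape-by-shape within that induction; carrying out this bookkeeping carefully—verifying that the correspondence on the leading two rooks is compatible with the shape seen by $\mu$—is where the real work lies. A purely direct bijection between $\mathfrak{S}_n(21\ominus\tau)$ and $\mathfrak{S}_n(12\ominus\tau)$ is tempting given the clean left-to-right-maximum description of the first class, but it appears to stall precisely because the avoidance condition for $12\ominus\tau$ has no equally clean left-to-right characterization, which is exactly the asymmetry that the shape induction is built to absorb.
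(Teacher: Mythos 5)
The first thing to note is that the paper does not prove this statement at all: it is imported as a known theorem of Babson and West (hence the citation \cite{babson} in the theorem header) and used as a black box in the proof of nonrationality for $321\ominus\rho$. So the only meaningful comparison is with the proof in the cited source, and your proposal reconstructs exactly that source's architecture: translate between skew sums and direct sums (your identity $(\alpha\ominus\beta)^c=\alpha^c\oplus\beta^c$ and the reduction of the theorem to $S_n(12\oplus\mu)=S_n(21\oplus\mu)$ for all $\mu$ are correct), prove that $12$ and $21$ are shape-Wilf-equivalent, and then invoke the composition principle that shape-Wilf-equivalence of the small block survives summing with an arbitrary pattern. Your opening characterization --- $p$ avoids $21\ominus\tau$ if and only if every entry dominating a copy of $\tau$ is a left-to-right maximum --- is also correct, and it is indeed the reason a board-indexed equivalence is the right tool.

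As a stand-alone proof, however, the proposal has two genuine gaps. First, your description of the composition principle is backwards. In the region-swap argument the Ferrers board must be the set of cells having an occurrence of $\mu$ entirely to their northeast, i.e.\ it must be determined by the part of the permutation that the bijection leaves \emph{fixed}, and the shape-Wilf bijection for $12$ versus $21$ is then applied to the rooks \emph{inside} that region. Cutting the shape out ``by the leading block'' and applying the base case ``shape-by-shape'' to the region available to $\mu$ cannot work: the base-case equivalence says nothing about $\mu$, and a region defined by the very entries being modified would not be stable under the swap, so the inverse map could not be defined. Even with the correct definition one must still verify that the region is unchanged after the swap; this requires the (standard but not automatic) lemma that every cell of the region is witnessed by a copy of $\mu$ using only rooks outside the region, proved by an iteration that replaces any witnessing rook lying inside the region by a copy of $\mu$ further to the northeast. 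Second, the base case --- shape-Wilf-equivalence of $12$ and $21$ --- is itself a nontrivial theorem; the ``scan and replace each witnessed $21$ by the corresponding $12$'' involution you sketch is precisely the kind of local-swap argument that in general creates new occurrences elsewhere and fails to be reversible, so it must be replaced by an actual proof (Babson--West's inductive argument on the board, or a growth-diagram/RSK argument in the style of Krattenthaler). With those two ingredients supplied, the rest of your reduction goes through.
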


Now we can state and prove our generalization of Theorem \ref{nonrat}. 

\begin{theorem} Let $k\geq 3$ and $\rho\in \mathfrak{S}_{k-3}$, and let $A_{321 \ominus \rho,1}(z)$ be the ordinary generating function of the sequence $S_n(321 \ominus \rho,1)$.
Then $A_{321 \ominus \rho,1}(z)$ is not rational.
\end{theorem}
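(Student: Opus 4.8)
The plan is to mimic the proof of Theorem~\ref{nonrat} almost verbatim, replacing the monotone pattern $k\cdots 21$ throughout by $321\ominus\rho$ and its first-two-entries swap $231\ominus\rho$, and then invoking Pringsheim's theorem once a finite evaluation at the radius of convergence has been secured. First I would pin down the exponential order of the sequence $S_n(321\ominus\rho,1)$ by sandwiching it between two copies of the avoidance sequence, exactly as before. For the upper bound, Theorem~\ref{thm:geninj} gives the injection $F_k:\mathfrak{S}_n(321\ominus\rho,1)\to\mathfrak{S}_{n+2}(231\ominus\rho)$, so $S_n(321\ominus\rho,1)\le S_{n+2}(231\ominus\rho)$. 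For the lower bound, the map $p\mapsto p\oplus(321\ominus\rho)$ sends $\mathfrak{S}_{n-k}(321\ominus\rho)$ into $\mathfrak{S}_n(321\ominus\rho,1)$, giving $S_n(321\ominus\rho,1)\ge S_{n-k}(321\ominus\rho)$.

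The crucial structural input is the Wilf-equivalence $S_n(321\ominus\rho)=S_n(231\ominus\rho)$, which follows from Theorem~\ref{babson} (taking $\tau=1\ominus\rho$, so that $21\ominus\tau=321\ominus\rho$ and $12\ominus\tau=231\ominus\rho$). With this in hand, both the upper and lower sandwich bounds are controlled by the single avoidance sequence $S_n(321\ominus\rho)$, so $s_{321\ominus\rho,1}(z)$ and the avoidance generating function $A_{321\ominus\rho}(z)$ share the same radius of convergence, call it $R$. Next I would verify that the pattern $321\ominus\rho$ (or a Wilf-equivalent one) meets the hypotheses of Lemma~\ref{value}: since the first and last entries of $321\ominus\rho$ are $3+(k-3)=k$ and $\rho_{k-3}$ respectively (and when $\rho$ is empty we are in the monotone case already handled, while for nonempty $\rho$ the last entry is at most $k-3<k$, so $\{1,k\}\neq\{q_1,q_k\}$), the hypothesis is satisfied. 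Lemma~\ref{value} then yields $A_{321\ominus\rho}(R)<\infty$.

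From the finiteness $A_{321\ominus\rho}(R)<\infty$ and the coefficientwise bound $S_n(321\ominus\rho,1)\le S_{n+2}(231\ominus\rho)=S_{n+2}(321\ominus\rho)$, I would conclude
\[
A_{321\ominus\rho,1}(R)\le\sum_{n\ge 0}S_{n+2}(321\ominus\rho)R^n=\frac{A_{321\ominus\rho}(R)-R-1}{R^2}<\infty.
\]
Finally, if $A_{321\ominus\rho,1}(z)$ were rational, its singularities would be poles, and since all its coefficients are nonnegative, Pringsheim's theorem forces $R$ to be a singularity, hence a pole, contradicting $A_{321\ominus\rho,1}(R)<\infty$. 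Therefore $A_{321\ominus\rho,1}(z)$ is not rational.

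The main obstacle I anticipate is the bookkeeping in the two places where $\rho$ interacts with the machinery: first, confirming that $321\ominus\rho$ genuinely satisfies the hypotheses of Lemma~\ref{value} (i.e.\ that $\{q_1,q_k\}\neq\{1,k\}$, possibly after passing to the reverse to ensure skew-indecomposability), and second, checking that the simple $\oplus$ lower bound still lands in $\mathfrak{S}_n(321\ominus\rho,1)$ rather than creating extra copies of the pattern. Both are routine once the decomposition structure of $321\ominus\rho$ is spelled out, but they are the steps where a careless application of the monotone-case argument could fail.
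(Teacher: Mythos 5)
Your overall strategy is exactly the paper's: the sandwich $S_{n-k}(321\ominus\rho)\le S_n(321\ominus\rho,1)\le S_{n+2}(231\ominus\rho)$ obtained from Theorem~\ref{thm:geninj} and the map $p\mapsto p\oplus(321\ominus\rho)$, the Wilf-equivalence $S_n(321\ominus\rho)=S_n(231\ominus\rho)$ via Theorem~\ref{babson} with $\tau=1\ominus\rho$, the resulting equality of convergence radii, finiteness of the generating function at its radius via Lemma~\ref{value}, and the Pringsheim argument copied from Theorem~\ref{nonrat}. All of these steps are the intended ones, and your application of Babson--West is correct.

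However, your verification of the hypothesis of Lemma~\ref{value} is wrong as written. You apply the lemma to $q=321\ominus\rho$ and argue that, for nonempty $\rho$, the last entry of $q$ is at most $k-3<k$, ``so $\{1,k\}\neq\{q_1,q_k\}$.'' That inference fails precisely when $\rho$ ends in the entry $1$: then $q_k=1$ while $q_1=k$, so $\{q_1,q_k\}=\{1,k\}$ and the first clause of the lemma does not apply. This is not confined to the monotone case you set aside (you only excluded empty $\rho$): for instance $\rho=231$, $k=6$, gives $q=654231$, which is not monotone yet has $\{q_1,q_k\}=\{1,6\}$. The gap is easily closed in either of two ways, both already at your disposal: (i) invoke the \emph{second} clause of Lemma~\ref{value}, since you have proved that $321\ominus\rho$ is Wilf-equivalent to $v=231\ominus\rho$, whose first entry is $v_1=k-1\notin\{1,k\}$; or (ii) do what the paper does and apply Lemma~\ref{value} directly to $231\ominus\rho$, working with $A_{231\ominus\rho}(z)$ throughout --- your final display already uses $S_{n+2}(231\ominus\rho)=S_{n+2}(321\ominus\rho)$, so nothing else changes. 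Your parenthetical ``(or a Wilf-equivalent one)'' shows you sensed the issue, but the argument you actually wrote takes the branch that breaks; with repair (i) or (ii) made explicit, the proof is complete and coincides with the paper's.
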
 

\begin{proof}
Theorem \ref{thm:geninj} shows that 
$S_n(321 \ominus \rho,1)\leq S_{n+2}(231 \ominus \rho)$. On the other hand, $S_n(321 \ominus \rho,1)\geq S_{n-k}(321 \ominus \rho)=S_{n-k}(231 \ominus \rho)$, where the last equality follows from Theorem \ref{babson}. The inequality, as in Theorem \ref{nonrat}, follows from the observation that if $p\in \mathfrak S_{n-k}$, then $p\oplus (321 \ominus \rho)\in \mathfrak S_n(321 \ominus \rho,1)$. Therefore, the sequences $S_n(321 \ominus \rho,1)$ and  $S_{n+2}(231 \ominus \rho)$ have the same exponential order, and their generating functions have the same convergence radius $R^*$.

 The pattern $231 \ominus \rho$ does not start
with its largest or smallest entry, so it satisfies the conditions of Lemma \ref{value}, and therefore,  if 
$A_{231 \ominus \rho}(z)$ is the generating function for the numbers $S_n(231 \ominus \rho)$, then
\[A_{231 \ominus \rho}(R^*) < \infty . \]
Finally, we can compare the values of $A_{231 \ominus \rho}(R^*)$ and 
 $A_{321 \ominus \rho,1}(R^*)$  as we did in the proof of Theorem \ref{nonrat}, and conclude that $A_{321 \ominus \rho,1}(R^*)<\infty$ .
\end{proof}
 
\section{Nonalgebraicity for monotone patterns}
In this section we return to the context of monotone patterns.  We show that if $k>2$ is even, then the generating function
$s_{k\cdots 21,1}(z)$ is not algebraic, because its coefficients grow
in a way in which the coefficients of no algebraic power series do.

We need the following classic result of Amitaj Regev.
\begin{theorem}[\cite{regev}] \label{regev}
For all $k\geq 2$, there exists a constant $\gamma_k$ so that the asymptotic equality
\[S_n(k\cdots 21) \simeq \gamma_k \frac{(k-1)^{2n}}{n^{(k^2-2k)/2} }\]
holds.
\end{theorem}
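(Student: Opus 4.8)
The plan is to translate the enumeration into a sum over Young diagrams by the Robinson--Schensted correspondence and then to extract its asymptotics by a local central limit / Laplace analysis. First I would observe that a permutation avoids $k\cdots21$ exactly when it has no decreasing subsequence of length $k$. Under the Robinson--Schensted correspondence, a bijection between $\mathfrak{S}_n$ and pairs $(P,Q)$ of standard Young tableaux of a common shape $\lambda\vdash n$, the length of the longest decreasing subsequence equals the number of rows $\ell(\lambda)$. Writing $m:=k-1$ and letting $f^\lambda$ denote the number of standard Young tableaux of shape $\lambda$, this yields the exact formula
\[
S_n(k\cdots 21)=\sum_{\substack{\lambda\vdash n\\ \ell(\lambda)\le m}}(f^\lambda)^2 ,
\]
so the whole problem becomes the asymptotic evaluation of this sum over partitions into at most $m$ parts.

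Next I would make the summand explicit. For $\lambda=(\lambda_1\ge\cdots\ge\lambda_m\ge 0)$ set $\ell_i:=\lambda_i+m-i$, so that $\ell_1>\cdots>\ell_m\ge 0$ and $\sum_i\ell_i=N$ with $N:=n+\binom m2$. The product formula for $f^\lambda$ gives
\[
f^\lambda=n!\,\frac{\prod_{1\le i<j\le m}(\ell_i-\ell_j)}{\prod_{i=1}^m \ell_i!},
\qquad\text{so}\qquad
(f^\lambda)^2=\Bigl(\tfrac{n!}{N!}\Bigr)^{2}\Bigl(\tfrac{N!}{\prod_i\ell_i!}\Bigr)^{2}\prod_{i<j}(\ell_i-\ell_j)^2 .
\]
The middle factor is the square of a multinomial coefficient, whose exponential growth is governed by entropy: it is maximized near the balanced diagram $\ell_i\approx N/m$, where it is of order $m^{2N}$, and $m^{2N}=m^{2n}\cdot m^{2\binom m2}$ contributes the claimed exponential rate $(k-1)^{2n}$.

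Then I would carry out the Laplace analysis around the balanced point, writing $\ell_i=N/m+\sqrt N\,z_i$ with $\sum_i z_i=0$. By the local central limit theorem for the multinomial distribution with equal cell probabilities $1/m$, the square of the multinomial factor is, uniformly on compact sets in $z$, asymptotic to $m^{2N}$ times a constant times $N^{-(m-1)}$ times a Gaussian $e^{-Q(z)}$ for a positive-definite quadratic form $Q$ on the hyperplane $\sum_i z_i=0$. The squared Vandermonde becomes $N^{\binom m2}\prod_{i<j}(z_i-z_j)^2$, and the ratio $(n!/N!)^2$ contributes $n^{-2\binom m2}$. Finally the sum over the $(m-1)$-dimensional lattice of admissible $(\ell_i)$, whose spacing in the $z$ variables is $N^{-1/2}$, converts into $n^{(m-1)/2}$ times an integral over $\{\sum_i z_i=0\}$. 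Collecting the competing powers of $n$, namely $n^{-2\binom m2}$ from the factorial correction, $n^{\binom m2}$ from the Vandermonde, $n^{-(m-1)}$ from the multinomial normalization, and $n^{(m-1)/2}$ from the lattice-to-integral Jacobian, produces the net exponent $-\binom m2-(m-1)/2=-(k^2-2k)/2$, while the limiting Gaussian--Vandermonde integral
\[
\int_{\sum_i z_i=0}\;\prod_{1\le i<j\le m}(z_i-z_j)^2\,e^{-Q(z)}\,dz
\]
(a Mehta-type, or special Selberg, integral, with a known closed evaluation) supplies the finite nonzero constant that, together with the Stirling constants, defines $\gamma_k$.

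The main obstacle is the rigorous justification of the passage from the discrete sum to the Gaussian integral: one needs Stirling estimates that are \emph{uniform} across the entire summation range, together with tail bounds showing that diagrams far from the balanced shape contribute negligibly, so that the local limit approximation may be integrated rather than merely applied pointwise. The precise accounting of the power of $n$, since it emerges as a delicate cancellation among four sources that individually scale like large powers of $n$, is the part most prone to error and must be done carefully; by contrast, the evaluation of the limiting integral, though classical and somewhat involved, is needed only to the extent of confirming that it is finite and strictly positive, which guarantees the existence of the constant $\gamma_k$.
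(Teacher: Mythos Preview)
The paper does not supply its own proof of this statement: the bracketed citation in the theorem header indicates that it is quoted as a classical result of Regev, and the paper simply uses it as a black box. Your outline is essentially a sketch of Regev's original argument---RSK reduces the count to $\sum_{\ell(\lambda)\le k-1}(f^\lambda)^2$, the Frobenius/hook formula is rewritten in the coordinates $\ell_i=\lambda_i+m-i$, and a Laplace/local-CLT analysis around the balanced shape extracts the exponential rate $(k-1)^{2n}$ and the polynomial correction, with the limiting Selberg/Mehta integral giving $\gamma_k$---and your power-of-$n$ bookkeeping indeed produces the exponent $-(k^2-2k)/2$. So there is nothing to contrast: your approach is precisely the one in the cited reference, and the caveats you flag (uniform Stirling estimates, tail bounds away from the balanced diagram) are exactly the technical points Regev handles.
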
  

We have seen in the proof of Theorem \ref{nonrat} that 
\[S_{n-k}(k\cdots 21) \leq S_n(k\cdots 21,1) \leq S_n((k-1)k\cdots 21) =S_{n+2}(k\cdots 21).\]
So Theorem \ref{regev} implies that there exist constants $c$ and $C$ so 
that 
\begin{equation} \label{chainin} c\frac{(k-1)^{2n}}{n^{(k^2 -2k)/2}} \leq  S_n(k\cdots 21,1) \leq C \frac{(k-1)^{2n}}{n^{(k^2 -2k)/2}}.\end{equation}

Now we can state and prove the main result of this section. 
\begin{theorem} \label{nonalg}
Let $k>2$ be an \emph{even} integer. Then the generating function $s_{k\cdots 21,1}(z)$ is not algebraic.
\end{theorem}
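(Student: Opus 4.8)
The plan is to use the precise asymptotic estimate in \eqref{chainin} together with the known structure theorem for coefficients of algebraic power series. The key input is that if $s_{k\cdots 21,1}(z)$ were algebraic, then by the transfer theorem for algebraic functions (see \cite[Theorem VII.8]{Flajolet}), its coefficients would admit an asymptotic expansion whose dominant term has the form $\beta \, \omega^n n^{\alpha}$, where $\omega = 1/R = (k-1)^2$ is the reciprocal of the convergence radius and the exponent $\alpha$ is a \emph{dyadic rational} of the form $p/2^{\ell}$ with $p$ an integer; more precisely, for an algebraic function the critical exponent cannot be a negative integer, since a singularity of the form $(1-z/R)^{-m}$ with $m$ a positive integer would make the function meromorphic (hence rational after removing poles) rather than genuinely algebraic.

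First I would read off from \eqref{chainin} that the coefficients $S_n(k\cdots 21,1)$ grow like $(k-1)^{2n} n^{-(k^2-2k)/2}$ up to constants bounded away from $0$ and $\infty$. This pins down the exponential growth rate $\omega=(k-1)^2$ and forces the polynomial correction exponent to be exactly $\alpha = -(k^2-2k)/2$. Next I would observe that when $k$ is even, the quantity $k^2-2k = k(k-2)$ is a product of two even integers, hence divisible by $4$, so $(k^2-2k)/2 = k(k-2)/2$ is an even integer; in particular $\alpha = -(k^2-2k)/2$ is a negative integer. The core of the argument is then that an algebraic power series with nonnegative coefficients and a unique dominant singularity at $z=R$ cannot have coefficient asymptotics $\sim c\, \omega^n n^{\alpha}$ with $\alpha$ a negative integer unless it is in fact rational, which it is not by Theorem \ref{nonrat}.

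To make this rigorous I would invoke the classification of singularities of algebraic functions: near its dominant singularity $R$, an algebraic function has a Puiseux expansion, so its singular part is a sum of terms $(1-z/R)^{s}$ with $s$ rational, and the corresponding coefficient contribution is $\omega^n n^{-s-1}$ up to a constant (with logarithmic corrections only in degenerate confluent cases). The exponent $-s-1$ governing the growth is therefore of the form (rational) but with the crucial property that the only way to obtain a negative \emph{integer} exponent $-s-1 = \alpha \in \Z_{<0}$ with no logarithm is to take $s$ a nonnegative integer, i.e. an analytic local contribution, which cannot be the source of a singularity. A genuine branch-type singularity of an algebraic function produces a half-integer (or other non-integer rational) exponent when the ramification is nontrivial, and produces logarithmic factors precisely in the confluent integer case; either way the clean asymptotic $c\,\omega^n n^{\alpha}$ with $\alpha$ a negative integer and no logarithmic factor is incompatible with algebraicity unless the function is rational.

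The main obstacle I anticipate is stating this incompatibility cleanly enough to apply it: the general coefficient asymptotics of an algebraic function is a finite sum over several conjugate dominant singularities of terms $\omega_j^n n^{\alpha_j}(\log n)^{e_j}$, and one must rule out cancellations or conspiracies among these. Here the positivity of the coefficients and Pringsheim's theorem guarantee that $R$ itself is a dominant singularity, and the two-sided bound \eqref{chainin} forbids any logarithmic factor and any oscillation in the leading term, so the leading asymptotic is genuinely $\Theta(\omega^n n^{\alpha})$ with a single real exponent $\alpha=-(k^2-2k)/2$. The remaining work is to quote the precise theorem that the admissible singular exponents of an algebraic power series are rationals whose denominators come from the ramification indices, and that an exponent forcing pure $n^{\alpha}$ growth with $\alpha$ a negative integer and no log term can only arise from the rational case. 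Combining this with $\alpha\in\Z_{<0}$ for even $k$, and with the non-rationality already established in Theorem \ref{nonrat}, yields the desired contradiction and hence the non-algebraicity of $s_{k\cdots 21,1}(z)$.
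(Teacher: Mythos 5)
Your proposal is correct and takes essentially the same route as the paper: the paper packages your key step as Lemma \ref{bostan} (supplied by Bostan), which quotes Flajolet's structure theorem for coefficients of algebraic series (Theorem D of the paper's reference \cite{Flajolet}; the result you intend is Theorem VII.8 of \cite{fs}) --- the coefficients are asymptotic to $\beta^n n^s/\Gamma(s+1)$ times a bounded oscillating sum, with $s$ rational and never a negative integer --- and then derives the contradiction from the two-sided bound \eqref{chainin} together with the observation that $(k^2-2k)/2$ is an integer greater than $1$ when $k>2$ is even. Two minor blemishes, both harmless to the argument: the Puiseux exponents of an algebraic function can be arbitrary rationals, not just dyadic rationals, and your detour through Theorem \ref{nonrat} is superfluous, since a rational function has coefficient asymptotics of the form $n^j\omega^n$ with $j$ a nonnegative integer and therefore also cannot produce the decaying polynomial factor $n^{-(k^2-2k)/2}$.
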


Note that if $k>2$ is even, then the exponent $(k^2-2k)/2$ of $n$ in the denominator above is a positive integer that is larger than $1$. This fact makes the following the key lemma applicable in the proof of Theorem \ref{nonalg}. We are indebted to Alin Bostan for the rigorous proof of this lemma. 

\begin{lemma} \label{bostan}
 Let $f(z)=\sum_{n\geq 0}f_nz^n$ be a power series with nonnegative real coefficients that is analytic at the origin. Let us assume that constants $c$, $C$, $K$ and $m$ exist so that $m>1$ is an integer, and for all positive integers $n$, the chain of inequalities
\begin{equation} \label{condition}  
c \frac{K^n}{n^m}  \leq f_n  \leq C \frac{K^n}{n^m}
\end{equation} 
holds. Then $f(z)$ is not an algebraic power series.
\end{lemma}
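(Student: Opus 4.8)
The plan is to exploit the known theory of singularity analysis for algebraic functions. The key classical fact is that if $f(z)$ is algebraic and analytic at the origin, then its coefficients admit an asymptotic expansion whose dominant term has the form $f_n \sim \gamma \, K^n n^{s}$, where the exponent $s$ is a \emph{rational number} that is \emph{not} a negative integer. This is a standard consequence of the Newton--Puiseux analysis of algebraic singularities combined with transfer theorems: the singularities of an algebraic function are branch points of Puiseux type, and the local expansion of $f$ near its dominant singularity $\rho = 1/K$ is a sum of terms $(1 - z/\rho)^{p/q}$ with $p/q \in \mathbb{Q}$. Transferring such a term to coefficient asymptotics via the standard function scale produces a power $n^{-p/q - 1}$, and crucially the coefficient extraction kills exactly those terms where $p/q$ is a \emph{nonnegative} integer (since $(1-z/\rho)^{\ell}$ is a polynomial for $\ell \in \mathbb{Z}_{\geq 0}$, contributing nothing to the asymptotics). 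Consequently the attainable exponents $s = -p/q - 1$ can never equal a negative integer; equivalently, $n^{-m}$ with $m > 1$ an integer cannot be the true order of growth of the coefficients of an algebraic series.

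First I would invoke Pringsheim's Theorem to locate the dominant singularity of $f$ on the positive real axis at $\rho = 1/K$, which is forced by the nonnegativity of the coefficients together with the two-sided bound \eqref{condition} pinning the exponential growth rate to $K$. Second, I would assume for contradiction that $f$ is algebraic. Third, I would apply the structure theorem for coefficients of algebraic functions (for instance the statement in Flajolet--Sedgewick \cite[Theorem VII.8]{Flajolet}, on the asymptotic form of coefficients of algebraic functions), which guarantees that $f_n$ has a genuine asymptotic expansion with leading term $C' \beta^n n^{s}$ where $s \notin \mathbb{Z}_{\leq -1}$. Matching exponential growth forces $\beta = K$, and then the hypothesis \eqref{condition} forces $f_n \asymp K^n n^{-m}$, which would require $s = -m$. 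Since $m > 1$ is an integer, $s = -m$ is a negative integer, contradicting the permissible range of exponents for algebraic coefficient asymptotics. This contradiction shows $f$ cannot be algebraic.

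The main obstacle, and the reason the lemma needs care rather than a one-line citation, is handling the possibility of \emph{multiple} singularities on the circle $|z| = \rho$ and the possibility that the leading Puiseux exponent is itself an integer so that the putative dominant term vanishes. In the worst case an algebraic function could have several conjugate dominant singularities whose contributions oscillate, or the naive leading term could cancel, leaving a subdominant term of a different order to govern the asymptotics. To be rigorous I would argue that the full asymptotic expansion of $f_n$ is a finite sum $\sum_j \omega_j^n n^{s_j} P_j(\log n)(c_j + o(1))$ over the dominant singularities $\rho\, \omega_j$, with each exponent $s_j$ ranging over the admissible (non-negative-integer-excluding) rational set and each $P_j$ a polynomial in $\log n$. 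The two-sided bound \eqref{condition} shows no logarithmic factors can appear and that the combined modulus grows exactly like $K^n n^{-m}$; since $f_n \geq 0$ this rules out genuine oscillatory cancellation in the averaged sense and forces a single effective exponent equal to $-m$. But $-m$ is excluded from the admissible set of algebraic exponents, the desired contradiction. The delicate point is verifying that the exclusion of negative-integer exponents survives after summing conjugate contributions and after accounting for any vanishing leading coefficients; this is precisely where the rigorous argument attributed to Alin Bostan is needed, and I would present it by reducing to the single-singularity transfer theorem applied term by term to the Puiseux expansion and checking that each admissible exponent $-p/q - 1$ avoids the value $-m$.
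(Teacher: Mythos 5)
Your proposal is correct and takes essentially the same approach as the paper: assume algebraicity, invoke Flajolet's structure theorem for coefficients of algebraic functions (rational exponent $s$ that is never a negative integer, multiplying $\beta^n$ and a bounded sum of unimodular terms), and use the two-sided bound $f_n \asymp K^n/n^m$ to force $s=-m$, the desired contradiction. The paper disposes of the multiple-singularity and cancellation issues you discuss simply by normalizing $z\mapsto z/K$ and citing Flajolet's Theorem D in the form $g_n \sim \frac{\beta^n n^s}{\Gamma(s+1)}\sum_j C_j\omega_j^n$ with a single exponent $s$ and bounded periodic factor (no logarithmic factors occur for algebraic series), so that boundedness of $g_n n^m$ forces $\beta=1$ and $s=-m$ directly.
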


\begin{proof}
Let us replace $z$ by $z/K$. This transformation preserves algebraicity, and it turns condition (\ref{condition}) into the chain of inequalities
\begin{equation} \label{newcondition}   \frac{c}{n^m}  \leq g_n  \leq  \frac{C}{n^m}.\end{equation}

Let us assume by contradiction that $f(z)$ is algebraic, or equivalently, that $g(z)= \sum_n g_n z^n$ is algebraic. By \cite[Theorem D]{Flajolet}, there exists a positive algebraic number $\beta$ and a rational number $s$ that is \emph{not a negative integer} so that
\[
g_n \sim  \frac{\beta^n \cdot  n^s}{\Gamma(s+1)}  \sum_{j=0}^r C_j \omega_j^n,
\]
with  the $C_j$ and the $\omega_j$  being algebraic constants, with $|\omega_j|=1$ for all $j$. Set $h_n=g_n \cdot n^m$, then
\begin{equation} \label{newestcondition}   
c \leq h_n  \leq  C
\end{equation}
and
\begin{equation} \label{hasym} 
h_n \sim  \frac{\beta^{n+m} \cdot  n^{s+m}}{\Gamma(s+1)}  \sum_{j=0}^r C_j \omega_j^n.
\end{equation}

Equation (\ref{newestcondition}) shows that the sequence $\{h_n\}_{n\ge 0}$ is bounded, while the finite sum  $\sum_{j=0}^r C_j \omega_j^n$ is bounded
since $|\omega_j|=1$ for all $j$. Therefore, Equation \eqref{hasym} implies that 
\[
A \leq \beta^{n+m} n^{s+m} \le B 
\]
for some positive constants $A$ and $B$. However, this can only happen if $\beta =1$ and $s=-m$, contradicting the assumption that $s$ is not a negative integer. 
\end{proof}

The proof of Theorem \ref{nonalg} is now immediate.
\begin{proof}[Proof of Theorem \ref{nonalg}]
If $k>2$ is even, then the exponent $(k^2 -2k)/2$ of $n$ in the denominator of both bounds in Equation \eqref{chainin} is an integer greater than 1. Therefore, by Lemma \ref{bostan}, the generating function $s_{k\cdots 21,1}(z)$ of the sequence  $S_n(k\cdots 21,1) $ is not algebraic.
\end{proof}

Note that the proof of this theorem was made possible by the fact that Theorem \ref{regev} describes the values of $S_n(k(k-1)\cdots 21)$ with a high level of precision, which 
enables us to use Lemma \ref{bostan}. There are no known results for other infinite families of patterns that would provide the growth rate of similar permutation classes at polynomial precision, therefore we were not able to prove non-algebraicity in a more general context. 

\medskip 

\begin{center} 
\textbf{Acknowledgment} 
\end{center}

We are indebted to Alin Bostan for a rigorous proof of Lemma~\ref{bostan} (personal communication, January 8, 2021). We are also grateful to Jay Pantone and Tony Guttmann for helpful remarks. We thank our anonymous referees for a careful reading
of the manuscript.

\end{document}